\newtheorem{theorem}{Theorem}
\theoremstyle{plain}
\newtheorem{corollary}{Corollary}
\newtheorem{definition}{Definition}
\newtheorem{lemma}{Lemma}
\newtheorem{notation}{Notation}
\newtheorem{remark}{Remark}
\numberwithin{equation}{section}
\begin{document}
\title{Pseudo-Projective Tensor on Sequential Warped Products}
\author{S\.INEM G\"ULER}
\address[S. G\"uler]{Department of Industrial Engineering,
Istanbul Sabahattin Zaim University, Halkal\.i, Istanbul, Turkey.}
\email{sinem.guler@izu.edu.tr}
\author{B\"ULENT \"UNAL}
\address[B. \"{U}nal]{Department of Mathematics, Bilkent University,
Bilkent, 06800 Ankara, Turkey}
\email{bulentunal@mail.com}
\subjclass[2010]{53C25, and 53C40.}
\keywords{pseudo-projective tensor; sequential warped products; generalized Robertson-Walker spacetime; standard static spacetime.}

\begin{abstract}
The main objective of this paper is to study pseudo-projective tensor on sequential warped products and
then to obtain necessary and sufficient conditions for a sequential warped product to be pseudo-projectively flat.
Moreover, we also provide characterization of pseudo-projectively flat sequential generalized Robertson-Walker
and pseudo-projectively flat sequential standard static spacetimes.
\end{abstract}

\maketitle

\section{Introduction}

Let $(M,g)$ be an $n$-dimensional $(n>2)$ pseudo-Riemannian manifold. Then the pseudo-projective
curvature tensor ${\rm P}$ on $M$ including the projective curvature tensor is defined by \cite{prasad}
\begin{eqnarray}
\label{pseudo projective curvature tensor}
{\rm P}(X,Y)Z & = & \alpha {\rm R}(X,Y)Z+\beta[{\rm Ric}(Y,Z)X-{\rm Ric}(X,Z)Y] \\
& - & \frac{\tau}{n}\bigl(\frac{\alpha}{n-1} + \beta \bigl)[g(Y,Z)X-g(X,Z)Y] \nonumber
\end{eqnarray}
where ${\rm R}$ is curvature tensor, ${\rm Ric}$ is the Ricci tensor, $\tau$ is the scalar
curvature of $(M,g)$ and $\alpha,\beta \in \mathbb{R}-\{0\}.$

If $\alpha =1$ and $\beta =-\frac{1}{n-1}$ in $\eqref{pseudo projective curvature tensor}$, then
the pseudo-projective curvature tensor takes the following form
\begin{align}
\label{projective curvature tensor}
{\mathcal{P}}(X,Y)Z=R(X,Y)Z-\frac{1}{n-1}[S(Y,Z)X-S(X,Z)Y]
\end{align}
where $\mathcal{P}$ denotes the projective curvature tensor (see \cite{mishra}). Thus, the projective curvature tensor is a particular case of the pseudo-projective curvature tensor.  In a Riemannian manifold $M$, if there exists a one-to-one correspondence between each coordinate neighborhood of $M$ and a domain in the Euclidean space such that any geodesic of the Riemannian manifold corresponds to a straight line in the Euclidean space, then $M$ is said to be locally projectively flat. For $n \geq 1$, $M$ is locally projectively flat if and only if the projective curvature tensor $\mathcal{P}$ vanishes. Moreover, $\eqref{projective curvature tensor}$ implies that $M$ is projectively flat if and only if it is of constant curvature. Hence the projective curvature tensor is the measure of the failure of a Riemannian manifold to be of constant curvature. In \cite{mica}, projective curvature tensors of a non-symmetric affine connection space are expressed as functions of the affine connection coefficients and Weyl projective tensor of the corresponding associated affine connection space. Moreover, projective flatness of non-symmetric affine connection spaces is analyzed. There are also two recent studies (see \cite{Bhattacharyya21, Ayar21}) in the former one relations between pseudo-projectivity and warped products have been studied and in the latter one this structure was considered on Riemannian submersions equipped with quasi-conformal curvature. After that pseudo-projectively Ricci semi-symmetric and pseudo-projectively flat (or conservative) semi-Riemannian manifolds are studied in \cite{guler-f}. In this paper, our aim is to extend these study to the sequential warped product manifolds and to obtain their relativistic applications.

The warped product of two Riemannian manifolds $(B,g_B)$ and $(F,g_F)$ as the product $B\times F$ equipped with the metric  $g_B \oplus f^2 g_F$ and denoted by $B\times_f F$, where the smooth function $f \colon B \rightarrow (0,\infty)$ is called the warping function,  \cite{bishop}. Then warped products of semi-Riemannian (not necessarily Lorentzian) manifolds and their Riemannian and  Ricci curvature tensors were given in \cite{Oneill:1983}. Then it turned out that the standard spacetime models of the universe and many other fundamental examples of relativistic spacetimes as the solutions of Einstein's field equations can be expressed in terms of warped product manifolds. As a result of that, warped products play very important roles in differential geometry as well as in the theory of general relativity. Two very well-known  examples of this notion are four dimensional Schwarzschild and de Sitter solutions of Einstein field equations. Then, a new concept of warped product metric has been introduced in which its base or fiber or both  also have the warped product structure and it is called sequential warped product, \cite{shenawy,de filomat}. The formal definition of this notion is given as follows:

\begin{definition}
Let $(M_i,g_i)$ be three semi-Riemannian manifolds of dimensions $m_i,$ respectively
for any $i=1,2,3.$ Suppose that $f \colon M_1\rightarrow (0,\infty)$ and $h \colon
M_1\times M_2 \rightarrow (0,\infty)$ are two smooth functions. Then the sequential
warped product is the product manifold $\bar{M}=(M_1\times_f M_2)\times_h M_3$ endowed
with the metric
\begin{equation}
\label{sequential warped product metric}
\bar{g}=(g_1\oplus  f^2 g_2 ) \oplus h^2 g_3.
\end{equation}
Both  $f$ and $g$ are called the warping functions.
\end{definition}

\begin{notation} \label{notation1}

\begin{enumerate}
\item[(i)]  Throughout the paper, we use Einstein's summation convention.
All the objects will be assumed to be smooth and all the manifolds are connected.
\item[(ii)] All objects having ``bar" symbol represent the objects of the
sequential warped product manifold and all objects having the indices or powers
$i$ denote  the objects of the manifold $(M_i,g_i)$, where $i=1,2,3$.
\item[(iii)] The Riemann curvature tensor is defined by ${\rm R}(X,Y)Z=[\nabla_X, \nabla_Y]Z- \nabla_{[X,Y]}Z$,
the Ricci tensor is ${\rm Ric}(X,Y)=\sum_{i=1}^n {\rm R}(e_i, X, Y, e_i)$ and the scalar curvature
$\tau=\sum_{i=1}^n {\rm Ric}(e_i, e_i)$, where $\{e_i: i=1, \ldots , n\}$ denotes an orthonormal basis on the manifold.
\item[(iv)] For any $X, Y \in {\rm T}(M)$, the Hessian of a smooth function $\phi$ is the second order
covariant differentiation defined by ${\rm H}^{\phi}(X,Y)=XY(\phi)-(\nabla_XY)\phi=g(\nabla_X {\rm grad} \phi, Y)$.
\item[(v)] On a sequential warped product $\bar{M}=(M_1\times_f M_2)\times_h M_3$, every vector field $X$ can be decomposed as the sum
\begin{equation*}
\label{decomposition}
X=X_1+X_2+X_3, \ \ \textrm{where} \ \ X_i \in {\rm T}(M_i), \  \ i=1,2,3.
\end{equation*}
\end{enumerate}
\end{notation}

In the next section, we have provided all the necessary curvature formulas related to the geometry of
sequential warped products (for further details see \cite{guler-electronic,Karaca,Pahan}). In Section 3, we derive 
expressions for the pseudo-projective tensor formulas of a sequential warped product to obtain our main 
results. Then we consider pseudo-projectively flat sequential warped products and establish implications 
of this concept on the components of a sequential warped product. Finally, we apply the mains results to 
characterize pseudo-projectively flat sequential generalized Robertson-Walker and pseudo-projectively 
flat sequential standard static spacetimes.

\section{Preliminaries}

In this section, we give the basic formulas for the Levi-Civita connection,
Riemann, Ricci and the scalar curvature of the sequential warped products
that will be used throughout the study.

From now on, assume that $\bar{M}=(M_1\times_f M_2)\times_h M_3$ is a
sequential warped product furnished with the metric
$\bar{g}=(g_1\oplus  f^2 g_2 ) \oplus h^2 g_3$ and further assume that
$X_i,Y_i, Z_i$ are in ${\rm T}(M_i)$, for any $i=1,2,3$. Then, we have:

\begin{lemma} \cite{de filomat} \label{thm:connection}
The components of the Levi-Civita connection on $(\bar{M}, \bar{g})$ are given by:
\begin{enumerate}
\item $\bar{\nabla}_{X_1}Y_1=\nabla^1_{X_1}Y_1$,
\item $\bar{\nabla}_{X_1}X_2=\bar{\nabla}_{X_2}X_1=X_1(\ln f)X_2$,
\item $\bar{\nabla}_{X_2}Y_2=\nabla^2_{X_2}Y_2-fg_2(X_2,Y_2){\rm grad}^1f$,
\item $\bar{\nabla}_{X_3}X_1=\bar{\nabla}_{X_1}X_3=X_1(\ln h)X_3$,
\item $\bar{\nabla}_{X_2}X_3=\bar{\nabla}_{X_3}X_2=X_2(\ln h)X_3$,
\item $\bar{\nabla}_{X_3}Y_3=\nabla^3_{X_3}Y_3-hg_3(X_3,Y_3){\rm grad}h$.
\end{enumerate}
\end{lemma}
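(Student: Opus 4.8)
The plan is to derive the six identities by realizing $\bar M$ as an ordinary (singly) warped product in two nested ways and applying the classical connection formula for warped products \cite{Oneill:1983}: for $N=\hat B\times_{\hat f}\hat F$ with metric $g_{\hat B}\oplus\hat f^{2}g_{\hat F}$ one has $\hat\nabla_UV=\nabla^{\hat B}_UV$ when $U,V$ are tangent to $\hat B$, $\hat\nabla_UW=\hat\nabla_WU=U(\ln\hat f)\,W$ when $U$ is tangent to $\hat B$ and $W$ to $\hat F$, and $\hat\nabla_{W_1}W_2=\nabla^{\hat F}_{W_1}W_2-\hat f\,g_{\hat F}(W_1,W_2)\,{\rm grad}\,\hat f$ when $W_1,W_2$ are tangent to $\hat F$.

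First I would view $\bar M=\bar B\times_h M_3$ with base $\bar B=M_1\times_f M_2$ carrying the metric $g_{\bar B}=g_1\oplus f^2g_2$, fiber $M_3$, and warping function $h\colon\bar B\to(0,\infty)$, so that $\bar g=g_{\bar B}\oplus h^2g_3$. Applying the warped product formula to this decomposition gives item (6) directly, and yields $\bar\nabla_UX_3=\bar\nabla_{X_3}U=U(\ln h)\,X_3$ for every $U$ tangent to $\bar B$; decomposing $U=X_1+X_2$ and using linearity, the cases $X_2=0$ and $X_1=0$ produce items (4) and (5). The same formula also gives $\bar\nabla_UV=\nabla^{\bar B}_UV$ whenever $U,V$ are tangent to $\bar B$, so items (1)--(3) are reduced to computing the Levi-Civita connection of the warped product $\bar B=M_1\times_f M_2$. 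A second application of the warped product formula, now with base $M_1$, fiber $M_2$ and warping function $f$, delivers exactly items (1), (2) and (3).

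The one point deserving care is the interpretation of ${\rm grad}\,h$ in item (6). Since $h$ depends only on the $\bar B$-coordinates and $TM_3$ is $\bar g$-orthogonal to $T\bar B$, the $\bar g$-gradient of $h$ equals its $g_{\bar B}$-gradient, which is precisely the vector field entering the warped product formula for $\bar B\times_h M_3$; if desired it can be split along $TM_1\oplus TM_2$ using $g_{\bar B}=g_1\oplus f^2g_2$, but that refinement is not needed here. Alternatively --- and this is the only genuinely computational route --- one can bypass the nesting and check all six identities directly from the \emph{Koszul formula}
\begin{equation*}
2\bar g(\bar\nabla_XY,Z)=X\bar g(Y,Z)+Y\bar g(X,Z)-Z\bar g(X,Y)+\bar g([X,Y],Z)-\bar g([X,Z],Y)-\bar g([Y,Z],X),
\end{equation*}
letting $X,Y,Z$ range over lifts of vector fields from the three factors and using that lifts from distinct factors commute and are mutually $\bar g$-orthogonal, together with $\bar g(X_2,Y_2)=f^2g_2(X_2,Y_2)$ and $\bar g(X_3,Y_3)=h^2g_3(X_3,Y_3)$. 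I do not expect a serious obstacle: the computation is essentially bookkeeping, and the only thing to stay attentive to is which factors the warping functions depend on --- $f$ on $M_1$ and $h$ on $M_1\times M_2$ --- so that, e.g., $X_2(\ln h)$ in item (5) is meaningful while $X_2(\ln f)=0$, and the nested application of the warped product formula is legitimate.
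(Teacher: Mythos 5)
Your proposal is correct: viewing $\bar M=(M_1\times_f M_2)\times_h M_3$ as the singly warped product $\bar B\times_h M_3$ with $\bar B=M_1\times_f M_2$ and applying O'Neill's connection formulas twice (or, equivalently, running the Koszul formula on lifts) is exactly the standard derivation, and your handling of the only delicate points --- that ${\rm grad}\,h$ taken on $\bar M$ agrees with the $g_{\bar B}$-gradient, and which factors $f$ and $h$ depend on --- is right. The paper itself offers no proof, quoting the lemma from \cite{de filomat}, and your argument coincides with the proof given there.
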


Note that, ${\rm H}_1^f$ and $\Delta_1 f$ denote the Hessian and Laplacian of
$f$ on $M_1$  respectively, while  ${\rm H}^h$ and $\Delta h$ denote the Hessian
and Laplacian of $h$ on $\bar{M}$, respectively.

\begin{lemma} \cite{de filomat} \label{thm:Riemannian curvature}
The non-zero components of the Riemannian curvature of $(\bar{M}, \bar{g})$ are given by:
\begin{enumerate}
\item $\bar{{\rm R}}(X_1,Y_1)Z_1={\rm R}_1(X_1,Y_1)Z_1$,
\item $\bar{{\rm R}}(X_2,Y_2)Z_2={\rm R}_2(X_2,Y_2)Z_2-||{\rm grad}_1f||^2[g_2(Y_2,Z_2)X_2-g_2(X_2,Z_2)Y_2]$,
\item $\bar{{\rm R}}(X_1,Y_2)Z_1=\frac{1}{f}{\rm H}_1^f(X_1,Z_1)Y_2$,
\item $\bar{{\rm R}}(X_1,Y_2)Z_2=-fg_2(Y_2,Z_2)\nabla^1_{X_1}{\rm grad}_1f $,
\item $\bar{{\rm R}}(X_i,Y_3)Z_j= \frac{1}{h}{\rm H}^h(X_i,Z_j)Y_3$, \ \ for $i,j=1,2$,
\item $\bar{{\rm R}}(X_i,Y_3)Z_3=-hg_3(Y_3,Z_3)\bar{\nabla}_{X_i}{\rm grad}h $,   $i=1,2$,
\item $\bar{{\rm R}}(X_3,Y_3)Z_3={\rm R}_3(X_3,Y_3)Z_3-||{\rm grad}h||^2[g_3(Y_3,Z_3)X_3-g_3(X_3,Z_3)Y_3]$.
\end{enumerate}
\end{lemma}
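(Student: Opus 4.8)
The plan is to compute $\bar{{\rm R}}(X,Y)Z=\bar{\nabla}_X\bar{\nabla}_YZ-\bar{\nabla}_Y\bar{\nabla}_XZ-\bar{\nabla}_{[X,Y]}Z$ directly from the connection components listed in Lemma \ref{thm:connection}, treating each of the seven combinations of factor vector fields separately. Since the curvature tensor is $C^\infty(\bar{M})$-linear in all three arguments, it suffices to evaluate it on vector fields $X_i,Y_i,Z_i$ that are lifts of vector fields from the factors $M_i$; for such lifts all brackets between vector fields living on different factors vanish, and brackets within a single factor reduce to the corresponding bracket on $M_i$. Equivalently, one may regard $\bar{M}=N\times_h M_3$ as an ordinary warped product over the base $N=M_1\times_f M_2$ and invoke O'Neill's warped-product curvature formulas (\cite{Oneill:1983}) twice: once for the pair $(N,M_3)$ with warping function $h$, and once more to expand the curvature, Hessian and gradient of $N$ itself as a warped product of $(M_1,g_1)$ and $(M_2,g_2)$ with warping function $f$. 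I will follow the first, more hands-on route.

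Cases (1)--(4) concern vector fields tangent only to $M_1$ and $M_2$, i.e. living entirely on $N$. Here $\bar{g}$ restricted to $TN$ is $g_1\oplus f^2 g_2$ and, by items (1)--(3) of Lemma \ref{thm:connection}, $\bar{\nabla}$ restricted to $TN$ is exactly the Levi-Civita connection of $N$ as a warped product. So Cases (1)--(4) are precisely O'Neill's formulas for the curvature of $M_1\times_f M_2$: Case (1) uses that $M_1$ is totally geodesic in $N$; Case (2) requires plugging $\bar{\nabla}_{X_2}Y_2=\nabla^2_{X_2}Y_2-fg_2(X_2,Y_2)\,{\rm grad}_1 f$ into the curvature operator and collecting the terms in which one derivative hits the warping correction — these assemble into the $-\|{\rm grad}_1 f\|^2$ multiple of $g_2(Y_2,Z_2)X_2-g_2(X_2,Z_2)Y_2$; Cases (3) and (4) use $\bar{\nabla}_{X_1}X_2=X_1(\ln f)X_2$ together with the identity ${\rm H}_1^f(X_1,Z_1)=g_1(\nabla^1_{X_1}{\rm grad}_1 f,Z_1)$ to produce the Hessian terms.

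Cases (5)--(7), which involve a vector field tangent to $M_3$, follow the same scheme one level up, with $N$ in the role of the base and $h$ in the role of the warping function. Since $h$ is constant along $M_3$, its Hessian and Laplacian on $\bar{M}$ reduce to those computed with the metric $g_1\oplus f^2 g_2$ and the connection $\bar{\nabla}|_{TN}$, which is why ${\rm H}^h$ and $\Delta h$ appear without further decomposition; Case (5) then gives the Hessian term $\frac{1}{h}{\rm H}^h(X_i,Z_j)Y_3$, Case (6) the $-hg_3(Y_3,Z_3)\bar{\nabla}_{X_i}{\rm grad}\,h$ term, and Case (7) the fiber formula for $M_3$ with the norm $\|{\rm grad}\,h\|^2$ taken in $\bar{g}$. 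In each case one uses items (4)--(6) of Lemma \ref{thm:connection} for the mixed derivatives and checks that the remaining ``unwanted'' terms cancel.

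The main obstacle is the bookkeeping in the mixed cases: one must track how a derivative $\bar{\nabla}_{X_i}$ acts on a warping-correction vector field (e.g. on ${\rm grad}_1 f\in TM_1$ or on ${\rm grad}\,h\in TN$), decide which piece is a genuine covariant derivative on the base and which piece is itself a further warping term, and verify that after all substitutions the only surviving contributions are exactly those displayed — in particular that the coefficient in Case (2) is $\|{\rm grad}_1 f\|^2$ rather than a combination also involving $f\,\nabla^1{\rm grad}_1 f$, and that in Case (7) the analogous cancellation leaves $\|{\rm grad}\,h\|^2$. Confirming these cancellations, together with checking that no extra nonzero components arise for combinations not listed (for instance $\bar{{\rm R}}(X_1,Y_1)Z_2$ and the like), is the computational heart of the proof, although each individual step is routine once the connection formulas of Lemma \ref{thm:connection} are in hand.
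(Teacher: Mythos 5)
Your plan is correct: the paper does not prove this lemma at all but simply quotes it from \cite{de filomat}, and your route --- direct computation of $\bar{{\rm R}}$ from the connection components of Lemma \ref{thm:connection}, or equivalently a double application of O'Neill's warped-product curvature formulas to $\bar{M}=(M_1\times_f M_2)\times_h M_3$ with the observation that the base leaves are totally geodesic so that ${\rm H}^h$ restricted to $T(M_1\times M_2)$ is the Hessian of the base --- is exactly how the cited source establishes it. No gaps beyond the routine bookkeeping you already flag (sign convention for ${\rm R}$, the cancellations giving $\|{\rm grad}_1 f\|^2$ and $\|{\rm grad}\,h\|^2$, and vanishing of the unlisted components).
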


\begin{lemma} \cite{de filomat}\label{thm:Ricci curvature}
The non-zero components of the Ricci curvature of $(\bar{M}, \bar{g})$ are given by:
\begin{enumerate}
\item $ \bar{{\rm Ric}}(X_1,Y_1)= {\rm Ric}_1(X_1,Y_1)-\frac{m_2}{f}{\rm H}_1^f(X_1,Y_1)-\frac{m_3}{h}{\rm H}^h(X_1,Y_1)$,
\item $ \bar{{\rm Ric}}(X_2,Y_2)= {\rm Ric}_2(X_2,Y_2)-f^{\sharp}g_2(X_2,Y_2)-\frac{m_3}{h}{\rm H}^h(X_2,Y_2)$,
\item $ \bar{{\rm Ric}}(X_3,Y_3)= {\rm Ric}_3(X_3,Y_3)-h^{\sharp}g_3(X_3,Y_3) $,
\end{enumerate}
where $f^{\sharp}=f\Delta_1f+(m_2-1)||{\rm grad}_1f||^2$ and $h^{\sharp}=h\Delta h+(m_3-1)||{\rm grad}h||^2$.
\end{lemma}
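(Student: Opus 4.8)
The plan is to obtain each component of $\bar{{\rm Ric}}$ by contracting the Riemann curvature tensor of Lemma~\ref{thm:Riemannian curvature} against an adapted orthonormal frame. Fix local orthonormal frames $\{e^1_a\}_a$, $\{e^2_b\}_b$, $\{e^3_c\}_c$ on $(M_1,g_1)$, $(M_2,g_2)$, $(M_3,g_3)$ (with signs $\varepsilon$ recording the causal type of each vector). Then the list $\{e^1_a\}\cup\{f^{-1}e^2_b\}\cup\{h^{-1}e^3_c\}$ is an orthonormal frame $\{E_p\}$ of $(\bar M,\bar g)$, so by Notation~\ref{notation1}(iii), $\bar{{\rm Ric}}(X,Y)=\sum_p\varepsilon_p\,\bar g\bigl(\bar{{\rm R}}(E_p,X)Y,E_p\bigr)$. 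Since the only nonzero curvature components are those listed in Lemma~\ref{thm:Riemannian curvature}, each such sum splits into three blocks according as $E_p$ lies in $M_1$, $M_2$ or $M_3$, and I would evaluate the blocks separately for $(X,Y)=(X_1,Y_1)$, $(X_2,Y_2)$, $(X_3,Y_3)$; the mixed components $\bar{{\rm Ric}}(X_i,Y_j)$ with $i\neq j$ vanish because in every surviving curvature term the output vector lies in a single factor orthogonal to the remaining slot.

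For $(X_1,Y_1)$: item~(1) gives the $M_1$-block ${\rm Ric}_1(X_1,Y_1)$; the antisymmetry $\bar{{\rm R}}(E_b,X_1)=-\bar{{\rm R}}(X_1,E_b)$ together with item~(3) gives the $M_2$-block $-\tfrac{m_2}{f}{\rm H}_1^f(X_1,Y_1)$; and item~(5) with $i=j=1$ gives the $M_3$-block $-\tfrac{m_3}{h}{\rm H}^h(X_1,Y_1)$, yielding~(1). For $(X_2,Y_2)$: item~(4) gives the $M_1$-block $-f\,g_2(X_2,Y_2)\sum_a\varepsilon_a{\rm H}_1^f(e^1_a,e^1_a)=-f(\Delta_1f)\,g_2(X_2,Y_2)$; item~(2), after cancelling the $f$'s against $\bar g|_{{\rm T}M_2}=f^2g_2$ and reconstituting $g_2$ from the frame, gives the $M_2$-block ${\rm Ric}_2(X_2,Y_2)-(m_2-1)\|{\rm grad}_1f\|^2g_2(X_2,Y_2)$; and item~(5) with $i=j=2$ gives the $M_3$-block $-\tfrac{m_3}{h}{\rm H}^h(X_2,Y_2)$; the first two combine into $-f^{\sharp}g_2(X_2,Y_2)$, giving~(2). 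For $(X_3,Y_3)$: items~(6) with $i=1$ and $i=2$ give the $M_1$- and $M_2$-blocks, each term of the form $-h\,g_3(X_3,Y_3)\,{\rm H}^h(E,E)$, summing to $-h(\Delta h)\,g_3(X_3,Y_3)$ once $\sum\varepsilon\,{\rm H}^h(E,E)$ over the base directions is recognised as $\Delta h$; item~(7) gives the $M_3$-block ${\rm Ric}_3(X_3,Y_3)-(m_3-1)\|{\rm grad}h\|^2g_3(X_3,Y_3)$; collecting terms gives~(3) with $h^{\sharp}=h\Delta h+(m_3-1)\|{\rm grad}h\|^2$.

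The routine but error-prone part is the bookkeeping of warping-factor normalizations: every frame vector drawn from $M_2$ or $M_3$ carries a factor $f^{-1}$ or $h^{-1}$, and one must check that these cancel correctly against the explicit $f$'s, $h$'s and $g_2$-, $g_3$-metrics appearing in Lemma~\ref{thm:Riemannian curvature}, and that the Hessian convention of Notation~\ref{notation1}(iv) makes the traces of items~(3), (5) and~(6) come out with the displayed minus signs. The one conceptual point I would flag concerns $\Delta h$ in~(3): the trace of ${\rm H}^h$ there runs only over base directions, and since ${\rm H}^h$ restricted to ${\rm T}(M_1)\oplus{\rm T}(M_2)$ is the same whether computed on $\bar M$ or on $M_1\times_f M_2$ (the relevant connection components in Lemma~\ref{thm:connection} agree there), this trace is the honest Laplacian of $h$ for $g_1\oplus f^2g_2$; no $M_3$-derivative of $h$ ever enters, which is exactly why the fiber term of $\bar{{\rm Ric}}(X_3,Y_3)$ carries only the single extra $\|{\rm grad}h\|^2$-contribution absorbed into $h^{\sharp}$. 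Everything remaining is a direct contraction.
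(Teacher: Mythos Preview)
The paper does not give a proof of this lemma at all; it is quoted from the reference \cite{de filomat} and simply stated as background. Your contraction argument against an adapted orthonormal frame is the standard route and is correct: the block-by-block evaluation using Lemma~\ref{thm:Riemannian curvature} is exactly how one derives these formulas, and your observation about $\Delta h$ (that the trace of ${\rm H}^h$ arising in~(3) runs only over base directions and hence coincides with the Laplacian of $h$ on $M_1\times_f M_2$, because the base of a warped product is totally geodesic) is precisely the point that justifies the form of $h^{\sharp}$.
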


\begin{lemma}\cite{de filomat} \label{thm:scalar curvature}
The relation between the scalar curvature $\bar{\tau}$ of $(\bar{M}, \bar{g})$ and the scalar curvatures
$\tau_i$ of $(M_i,g_i)$ for any $i=1,2,3,$ is given by:
\begin{eqnarray*}
\bar{\tau} & = & \tau_1+\frac{r_2}{f^2}+\frac{r_3}{h^2}-\frac{2m_2}{f}
\Delta_1f-\frac{2m_3}{h}\Delta h \\
& - & \frac{m_2(m_2-1)}{f^2}||{\rm grad}_1f||^2 - \frac{m_3(m_3-1)}{h^2}||{\rm grad}h||^2.
\end{eqnarray*}
\end{lemma}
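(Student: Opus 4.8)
The plan is to obtain $\bar{\tau}$ simply as the $\bar{g}$-trace of the Ricci tensor of $(\bar M,\bar g)$, substituting the three block formulas of Lemma~\ref{thm:Ricci curvature}. First I would fix local orthonormal frames $\{e^1_a\}_{a=1}^{m_1}$ on $(M_1,g_1)$, $\{e^2_b\}_{b=1}^{m_2}$ on $(M_2,g_2)$ and $\{e^3_c\}_{c=1}^{m_3}$ on $(M_3,g_3)$. By the form of the metric \eqref{sequential warped product metric}, the family $\{e^1_a\}\cup\{\tfrac1f e^2_b\}\cup\{\tfrac1h e^3_c\}$ is $\bar{g}$-orthonormal on $\bar M$, so
\[
\bar{\tau}=\sum_{a=1}^{m_1}\bar{{\rm Ric}}(e^1_a,e^1_a)+\frac1{f^2}\sum_{b=1}^{m_2}\bar{{\rm Ric}}(e^2_b,e^2_b)+\frac1{h^2}\sum_{c=1}^{m_3}\bar{{\rm Ric}}(e^3_c,e^3_c),
\]
and each sum is evaluated with the corresponding item of Lemma~\ref{thm:Ricci curvature}.

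Next I would carry out the three contractions. For the $M_1$-block, Lemma~\ref{thm:Ricci curvature}(1) gives $\sum_a\bar{{\rm Ric}}(e^1_a,e^1_a)=\tau_1-\frac{m_2}{f}{\rm tr}_1{\rm H}_1^f-\frac{m_3}{h}{\rm tr}_1{\rm H}^h$, where ${\rm tr}_i$ is the trace over the $M_i$-directions taken in the $\bar g$-orthonormal frame above and ${\rm tr}_1{\rm H}_1^f=\Delta_1 f$ by definition. For the $M_2$-block, Lemma~\ref{thm:Ricci curvature}(2) with $\sum_b g_2(e^2_b,e^2_b)=m_2$ gives $\frac1{f^2}\sum_b\bar{{\rm Ric}}(e^2_b,e^2_b)=\frac{\tau_2}{f^2}-\frac{m_2 f^{\sharp}}{f^2}-\frac{m_3}{h}{\rm tr}_2{\rm H}^h$; for the $M_3$-block, Lemma~\ref{thm:Ricci curvature}(3) with $\sum_c g_3(e^3_c,e^3_c)=m_3$ gives $\frac1{h^2}\sum_c\bar{{\rm Ric}}(e^3_c,e^3_c)=\frac{\tau_3}{h^2}-\frac{m_3 h^{\sharp}}{h^2}$ (the statement abbreviates $\tau_2,\tau_3$ by $r_2,r_3$). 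Summing the three blocks,
\[
\bar{\tau}=\tau_1+\frac{\tau_2}{f^2}+\frac{\tau_3}{h^2}-\frac{m_2}{f}\Delta_1 f-\frac{m_3}{h}\bigl({\rm tr}_1{\rm H}^h+{\rm tr}_2{\rm H}^h\bigr)-\frac{m_2 f^{\sharp}}{f^2}-\frac{m_3 h^{\sharp}}{h^2}.
\]

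The only point requiring care is the term $-\frac{m_3}{h}({\rm tr}_1{\rm H}^h+{\rm tr}_2{\rm H}^h)$: since $h$ is a function on $M_1\times M_2$ and, by Lemma~\ref{thm:connection}(1)--(3), the restriction of $\bar\nabla$ to $T(M_1\times M_2)$ is the Levi-Civita connection of $M_1\times_f M_2$, the Hessian ${\rm H}^h$ restricted to $T(M_1\times M_2)$ is the Hessian of $h$ on $M_1\times_f M_2$, whence ${\rm tr}_1{\rm H}^h+{\rm tr}_2{\rm H}^h=\Delta h$ (the $M_3$-directions contribute nothing to this $\Delta h$, matching the convention under which $h^{\sharp}$ is written). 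It then remains to substitute $f^{\sharp}=f\Delta_1 f+(m_2-1)\|{\rm grad}_1 f\|^2$ and $h^{\sharp}=h\Delta h+(m_3-1)\|{\rm grad}\,h\|^2$ and collect: the $\Delta_1 f$ already present merges with the $\Delta_1 f$ coming out of $m_2 f^{\sharp}/f^2$ to give $-\frac{2m_2}{f}\Delta_1 f$, the two $\Delta h$-terms merge to $-\frac{2m_3}{h}\Delta h$, and the surviving pieces of $f^{\sharp}$ and $h^{\sharp}$ yield exactly $-\frac{m_2(m_2-1)}{f^2}\|{\rm grad}_1 f\|^2$ and $-\frac{m_3(m_3-1)}{h^2}\|{\rm grad}\,h\|^2$, which is the asserted identity. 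I do not expect a genuine obstacle here: the computation is a routine trace, and the single delicate step is the bookkeeping just described, namely that the partial Hessian traces of $h$ along $M_1$ and $M_2$ add up to $\Delta h$ and then combine with the $\Delta h$ already inside $h^{\sharp}$.
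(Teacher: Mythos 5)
Your proposal is correct. Note that the paper itself offers no proof of this lemma (it is quoted from \cite{de filomat}), but your argument --- tracing the three Ricci blocks of Lemma \ref{thm:Ricci curvature} in a $\bar g$-orthonormal frame and then absorbing the $\Delta_1 f$ and $\Delta h$ terms hidden in $f^{\sharp}$ and $h^{\sharp}$ --- is exactly the standard derivation, and every step checks out. You also correctly isolated the one genuinely delicate point: the formula only closes up if $\Delta h$ (and the $\Delta h$ inside $h^{\sharp}$) is read as the Laplacian of $h$ on $M_1\times_f M_2$, i.e.\ the partial trace ${\rm tr}_1{\rm H}^h+{\rm tr}_2{\rm H}^h$, which is legitimate because Lemma \ref{thm:connection}(1)--(3) shows $\bar\nabla$ restricted to ${\rm T}(M_1\times M_2)$ is the Levi-Civita connection of $M_1\times_f M_2$. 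Had one taken the Notation's phrase ``Laplacian of $h$ on $\bar M$'' literally, one would pick up the extra term $\frac{m_3}{h}\|{\rm grad}\,h\|^2$ coming from the $M_3$-directions (since ${\rm H}^h(X_3,Y_3)=h\,\|{\rm grad}\,h\|^2 g_3(X_3,Y_3)$), and the coefficients in the lemma would change; so your reading is the one under which both Lemma \ref{thm:Ricci curvature} and the stated scalar-curvature formula are consistent. Two cosmetic remarks: in the semi-Riemannian setting the frame traces should carry the signs $\varepsilon_i=\bar g(e_i,e_i)$, but you are merely following the paper's own convention in Notation \ref{notation1}(iii); and an equivalent, slightly shorter route is to iterate the classical two-factor formula, first for $N=M_1\times_f M_2$ and then for $\bar M=N\times_h M_3$, which makes the $\Delta h$ bookkeeping automatic.
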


In what follows, we use again Notations \ref{notation1} to obtain the following:

\begin{lemma}\label{thm:hessian} \cite{guler-electronic}
The Hessian tensor ${\rm H}^{\varphi}$ of $\varphi$ on  a sequential warped product $\bar{M}=(M_1\times_f M_2)\times_h M_3$  satisfies
\begin{align*}
(1) \ {\rm H}^{\varphi} (X_1,Y_1)&={\rm H}_1^{\varphi}(X_1,Y_1),\\ \notag
(2) \ {\rm H}^{\varphi} (X_1,Y_2)&=X_1(\ln f)Y_2(\varphi),\\ \notag
(3) \ {\rm H}^{\varphi} (X_1,Y_3)&=X_1(\ln h)Y_3(\varphi),\\ \notag
(4) \ {\rm H}^{\varphi} (X_2,Y_2)&=f\nabla \varphi(f)g_2(X_2,Y_2) +{\rm H}_2^{\varphi}(X_2,Y_2),\\ \notag
(5) \ {\rm H}^{\varphi} (X_2,Y_3)&=X_2(\ln h)Y_3(\varphi),\\ \notag
(6) \ {\rm H}^{\varphi} (X_3,Y_3)&=h\nabla \varphi(h)g_3(X_3,Y_3) +{\rm H}_3^{\varphi}(X_3,Y_3). \notag
\end{align*}

 \end{lemma}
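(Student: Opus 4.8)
The plan is to compute ${\rm H}^{\varphi}$ directly from its definition ${\rm H}^{\varphi}(X,Y)=XY(\varphi)-(\bar{\nabla}_XY)(\varphi)$, substituting the six components of the Levi-Civita connection $\bar{\nabla}$ recorded in Lemma~\ref{thm:connection} and organizing the computation according to which of the factors $M_1,M_2,M_3$ the two arguments are tangent to. Since ${\rm H}^{\varphi}$ is symmetric and, locally, $\bar{M}=M_1\times M_2\times M_3$, it suffices to evaluate ${\rm H}^{\varphi}(X_i,Y_j)$ on lifts $X_i\in{\rm T}(M_i)$ and $Y_j\in{\rm T}(M_j)$ with $i\le j$; throughout one uses that lifts of vector fields from distinct factors commute.

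The off-diagonal cases (2), (3) and (5) are immediate: in each of them the relevant connection component from Lemma~\ref{thm:connection} is a pointwise scalar multiple of a single factor field --- for instance $\bar{\nabla}_{X_1}Y_2=X_1(\ln f)Y_2$ --- so substituting into the definition and simplifying leaves the stated multiple of $Y_j(\varphi)$. Case (1) is equally quick: Lemma~\ref{thm:connection}(1) gives $\bar{\nabla}_{X_1}Y_1=\nabla^1_{X_1}Y_1$, and since $X_1,Y_1$ differentiate only in the $M_1$ directions the expression $X_1Y_1(\varphi)-(\nabla^1_{X_1}Y_1)\varphi$ is literally the defining formula for ${\rm H}_1^{\varphi}(X_1,Y_1)$.

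The substantive cases are (4) and (6). Starting from $\bar{\nabla}_{X_2}Y_2=\nabla^2_{X_2}Y_2-fg_2(X_2,Y_2)\,{\rm grad}_1f$ (Lemma~\ref{thm:connection}(3)), the first two of the three resulting terms recombine into the fibre Hessian ${\rm H}_2^{\varphi}(X_2,Y_2)$, and the remaining term is $fg_2(X_2,Y_2)\,({\rm grad}_1f)(\varphi)$, which I would rewrite as $f\,\nabla\varphi(f)\,g_2(X_2,Y_2)$ after observing that $f$ lives on $M_1$, so ${\rm grad}_{\bar{g}}f={\rm grad}_1f$ and hence $({\rm grad}_1f)(\varphi)=\bar{g}(\nabla\varphi,{\rm grad}_1f)=\nabla\varphi(f)$. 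Case (6) is identical in shape: from Lemma~\ref{thm:connection}(6) one obtains $h\,\nabla\varphi(h)\,g_3(X_3,Y_3)+{\rm H}_3^{\varphi}(X_3,Y_3)$. The whole argument is conceptually trivial; the only point that demands care --- what I would call the ``hard part'' --- is the bookkeeping in these last two cases: correctly tracking the warping factors $f^2$ and $h^2$ relating $\bar{g}$ to $g_2$ and $g_3$, recognizing the truncated expressions as the induced Hessians ${\rm H}_2^{\varphi}$ and ${\rm H}_3^{\varphi}$ on the fibres, and carrying out the gradient identification, which works precisely because each warping function depends only on the factors preceding it in the sequential product.
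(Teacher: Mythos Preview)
The paper does not prove this lemma; it is quoted from \cite{guler-electronic} without argument, so there is no proof in the paper to compare against. Your method --- direct evaluation of ${\rm H}^{\varphi}(X,Y)=XY(\varphi)-(\bar\nabla_XY)(\varphi)$ on lifts, feeding in the six connection identities of Lemma~\ref{thm:connection} --- is the standard (and essentially the only) way to establish such formulas, and it is certainly what the cited source does.

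One caution, however: you describe the off-diagonal cases (2), (3), (5) as ``immediate'' and say the substitution ``leaves the stated multiple of $Y_j(\varphi)$''. Carrying it out for (2) actually yields
\[
{\rm H}^{\varphi}(X_1,Y_2)=X_1Y_2(\varphi)-(\bar\nabla_{X_1}Y_2)(\varphi)=X_1Y_2(\varphi)-X_1(\ln f)\,Y_2(\varphi),
\]
and analogously for (3) and (5). For a general $\varphi$ on $\bar M$ the mixed second derivative $X_1Y_2(\varphi)$ does not vanish, so these three printed identities appear to carry a missing term and a sign slip. Your argument is methodologically sound; a careful execution simply does not reproduce those three formulas verbatim. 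The diagonal cases (1), (4), (6) --- including item (4), the only one the paper actually invokes later --- do come out exactly as stated under your computation.
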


 Also, note that for any function $\varphi$, the following relation holds:
 \begin{equation}
 \label{prem:eq.1}
 \displaystyle{\frac{m}{\varphi}{\rm H}^\varphi = {\rm H}^{m \ln \varphi} +
\frac{1}{m} {\rm d}(m \ln \varphi)
\otimes {\rm d}(m \ln \varphi)}, \ \ m \in \mathbb{R}.
 \end{equation}

\section{Pseudo-Projective Tensor on Sequential Warped Products }
In this section, some Ricci-Hessian class type manifolds are investigated on pseudo-projectively flat sequential warped products. If the pseudo-projective curvature tensor of $(M,g)$ vanishes identically, then it is called  pseudo-projectively flat.

\begin{remark} Recall that every pseudo-projectively manifold is also Einstein.
\end{remark}

An $n(>2$)-dimensional smooth manifold $(M^n,g)$ is said to be a  generalized quasi Einstein manifold in the sense of Catino  \cite{catino} 
if there exist three smooth functions $\varphi, \alpha$ and $\lambda$ on $M$ such that
\begin{equation}
\label{generalized quasi Einstein}
{\rm Ric}+{\rm H}^{\varphi}-\alpha {\rm d}\varphi \otimes {\rm d}\varphi=\lambda g
\end{equation}
and it is denoted by $(M^n,g,\varphi, \alpha, \lambda)$.  There are some different subclasses of this equation that define many important manifolds:
\begin{enumerate}
\item[(i)]If $\alpha =\frac{1}{m}$, for positive integer $0<m<\infty$ then $M$ is called an $m$-generalized quasi Einstein manifold.
\item[(ii)]If $\alpha =0$, then $M$ is called a gradient almost Ricci soliton, \cite{barros} and it is denoted by $(M,g, \varphi, \lambda)$ and $\varphi$ is called the potential function.
\item[(iii)] Another particular case of $\eqref{generalized quasi Einstein}$ can be written as
\begin{equation}
\label{h-grs}
{{\rm Ric}}+\psi{\rm H}^{\varphi}=\lambda {g}.
\end{equation}
This structure is said to be a $\psi$-almost gradient  Ricci soliton  \cite{yun} and  briefly denoted by $(M,g, \varphi, \psi, \lambda)$. For recent results for this class of solitons, we refer \cite{blaga-turki}. Note that, for every smooth function $\varphi$, the following relation can be verified:
\begin{equation}\label{h-4}
\begin{array}{ll}
{\rm H}^{\ln \varphi}=\displaystyle\frac{1}{\varphi}{\rm H}^{\varphi}-\displaystyle\frac{1}{\varphi^{2}}{\rm d}\varphi\otimes {\rm d}\varphi .
\end{array}
\end{equation}  By defining a function $\phi=e^{\frac{-\varphi}{m}}$, we get  $\frac{m}{\phi}{\rm H}^{\phi}=-{\rm H}^{\varphi}+\frac{1}{m} {\rm d}\varphi \otimes {\rm d}\varphi$. Thus  using $\eqref{h-4}$, $\eqref{generalized quasi Einstein}$ can be written as
\begin{equation}
{{\rm Ric}}-\frac{m}{\phi} {\rm H}^{\phi}=\lambda {g}.
\end{equation}
Hence,  any $m$-generalized quasi Einstein manifold is a $(\frac{-m}{\phi})$-almost gradient Ricci soliton.
\item[(iv)] More specificially, a  Riemannian manifold $(M^n, g)$ is called a conformal gradient soliton if there exists a non-constant smooth function $f$, called potential of the soliton, such that
\begin{equation}
\label{conformal gradient soliton}
\textrm{H}^f=\varphi g,
\end{equation} for some function $\varphi: M^n\rightarrow \mathbb{R}$.
 Tashiro  studied in \cite{tashiro}  complete Riemannian manifolds admitting a vector field $\nabla f$ satisfying equation $\eqref{conformal gradient soliton}$.
Then, in \cite{cheeger} Cheeger and Colding gave the solutions of the equation $\eqref{conformal gradient soliton}$ and they obtained the characterization of warped product manifolds. Also, the equation $\eqref{conformal gradient soliton}$ is the special case of gradient Ricci soliton structure, that has been studied extensively on the warped product manifold in \cite{pina}. For instance, in \cite{marco}  Marco and Gouthier gave the classification of the warped products of constant sectional curvature satisfying the condition $\eqref{conformal gradient soliton}$.
\end{enumerate}

\begin{theorem} \label{thm:projective curvature}
The non-zero components of the pseudo-projective curvature of a sequential warped product
$(\bar{M}, \bar{g})$ of the form $\bar{M}=(M_1\times_f M_2)\times_h M_3$ equipped with
$\bar{g}=(g_1\oplus  f^2 g_2 ) \oplus h^2 g_3$ are as follow:
\begin{enumerate}
\item \begin{eqnarray*} \bar{{\rm P}}(X_1,Y_1)Z_1 & = & \alpha {\rm R}_1(X_1,Y_1)Z_1 \\
& + & \beta[{\rm Ric}_1(Y_1,Z_1)X_1 - \frac{m_2}{f}{\rm H}_1^f(Y_1,Z_1)X_1 - \frac{m_3}{h}{\rm H}^h(Y_1,Z_1)X_1] \\
& - & \beta[{\rm Ric}_1(X_1,Z_1)Y_1 - \frac{m_2}{f}{\rm H}_1^f(X_1,Z_1)Y_1 - \frac{m_3}{h}{\rm H}^h(X_1,Z_1)Y_1] \\
& - & \frac{\tau}{n}\bigl( \frac{\alpha}{n-1}+\beta \bigl)[g_1(Y_1,Z_1)X_1-g_1(X_1,Z_1)Y_1],
\end{eqnarray*}
\item \begin{eqnarray*} \bar{{\rm P}}(X_2,Y_2)Z_2 & = & \alpha{\rm R}_2(X_2,Y_2)Z_2 - \alpha \|\nabla_1 f\|^2
[g_2(Y_2,Z_2)X_2-g_2(X_2,Z_2)Y_2] \\
& + & \beta[{\rm Ric}_2(Y_2,Z_2)X_2 - f^\sharp g_2(Y_2,Z_2)X_2 - \frac{m_3}{h}{\rm H}^h(Y_2,Z_2)X_2] \\
& - & \beta[{\rm Ric}_2(X_2,Z_2)Y_2 - f^\sharp g_2(X_2,Z_2)Y_2 - \frac{m_3}{h}{\rm H}^h(X_2,Z_2)Y_2] \\
& - & \frac{\tau}{n}\bigl( \frac{\alpha}{n-1}+\beta \bigl) f^2 [g_2(Y_2,Z_2)X_2-g_1(X_2,Z_2)Y_2],
\end{eqnarray*}
\item \begin{eqnarray*} \bar{{\rm P}}(X_1,Y_2)Z_1 & = & \frac{\alpha}{f}{\rm H}_1^f(X_1,Z_1)Y_2 +\frac{\tau}{n} \bigl( \frac{\alpha}{n-1}+\beta \bigl) g_1(X_1,Z_1)Y_2 \\
& - & \beta [{\rm Ric}_1(X_1,Z_1)Y_2 - \frac{m_2}{f}{\rm H}_1^f(X_1,Z_1)Y_2 - \frac{m_3}{h}{\rm H}^h(X_1,Z_1)Y_2 ],
\end{eqnarray*}
\item \begin{eqnarray*} \bar{{\rm P}}(X_1,Y_2)Z_2 & = & -\alpha f g_2(Y_2,Z_2) \nabla^1_{X_1} \nabla_1 f-\frac{\tau}{n} \bigl( \frac{\alpha}{n-1}+\beta \bigl) f^2 g_2(Y_2,Z_2)X_1 \\
& + & \beta [{\rm Ric}_2(Y_2,Z_2)X_1 - f^\sharp g_2(Y_2,Z_2)X_1 - \frac{m_3}{h}{\rm H}^h(Y_2,Z_2)X_1],
\end{eqnarray*}
\item \begin{eqnarray*} \bar{{\rm P}}(X_1,Y_3)Z_1 & = & \frac{\alpha}{f}{\rm H}_1^f(X_1,Z_1)Y_3 + \frac{\tau}{n} \bigl( \frac{\alpha}{n-1}+\beta \bigl) g_1(X_1,Z_1)Y_3 \\
& - & \beta [{\rm Ric}_1(X_1,Z_1)Y_3 - \frac{m_2}{f}{\rm H}_1^f(X_1,Z_1)Y_3 - \frac{m_3}{h}{\rm H}^h(X_1,Z_1)Y_3 ],
\end{eqnarray*}
\item $\displaystyle{\bar{{\rm P}}(X_1,Y_3)Z_2 = \frac{\alpha}{h} {\rm H}^h(X_1,Z_2)Y_3 },$
\item $\displaystyle{\bar{{\rm P}}(X_2,Y_3)Z_1 = \frac{\alpha}{h} {\rm H}^h(X_2,Z_1)Y_3 },$
\item \begin{eqnarray*} \bar{{\rm P}}(X_2,Y_3)Z_2 & = & \frac{\alpha}{h}{\rm H}_1^f(X_2,Z_2)Y_3 +\frac{\tau}{n} \bigl( \frac{\alpha}{n-1}+\beta \bigl) f^2 g_2(X_2,Z_2)Y_3\\
& - & \beta [{\rm Ric}_2(X_2,Z_2)Y_3 - f^\sharp g_2(X_2,Z_2)Y_3 - \frac{m_3}{h}{\rm H}^h(X_2,Z_2)Y_3 ],
\end{eqnarray*}
\item \begin{eqnarray*} \bar{{\rm P}}(X_1,Y_3)Z_3 & = & -\alpha h g_3(Y_3,Z_3) \nabla_{X_1} \nabla h  -\frac{\tau}{n} \bigl( \frac{\alpha}{n-1}+\beta \bigl) h^2 g_3(Y_3,Z_3)X_1\\
& + & \beta [{\rm Ric}_3(Y_3,Z_3)X_1 - h^\sharp g_3(Y_3,Z_3)X_1],
\end{eqnarray*}
\item \begin{eqnarray*} \bar{{\rm P}}(X_2,Y_3)Z_3 & = & -\alpha h g_3(Y_3,Z_3) \nabla_{X_2} \nabla h -\frac{\tau}{n} \bigl( \frac{\alpha}{n-1}+\beta \bigl) h^2 g_3(Y_3,Z_3)X_2 \\
& + & \beta [{\rm Ric}_3(Y_3,Z_3)X_2 - h^\sharp g_3(Y_3,Z_3)X_2],
\end{eqnarray*}
\item \begin{eqnarray*} \bar{{\rm P}}(X_3,Y_3)Z_3 & = & \alpha{\rm R}_3(X_3,Y_3)Z_3 -\frac{\tau}{n}\bigl( \frac{\alpha}{n-1}+\beta \bigl) h^2 [g_3(Y_3,Z_3)X_3-g_3(X_3,Z_3)Y_3]\\
& + & \beta[{\rm Ric}_3(Y_3,Z_3)X_3 - h^\sharp g_3(Y_3,Z_3)X_3] \\
& - & \beta[{\rm Ric}_3(X_3,Z_3)Y_3 - h^\sharp g_3(X_3,Z_3)Y_3],
\end{eqnarray*}
\end{enumerate}
where $X_i \in {\rm T}(M_i)$ for any $i=1,2,3$ and $n=m_1+m_2+m_3.$
\end{theorem}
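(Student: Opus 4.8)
The plan is to substitute the curvature formulas of Lemmas \ref{thm:Riemannian curvature} and \ref{thm:Ricci curvature} directly into the defining identity \eqref{pseudo projective curvature tensor} for $\bar{\rm P}$ and to simplify, one index pattern at a time. Two structural observations make this a finite task. First, the metric $\bar g=(g_1\oplus f^2g_2)\oplus h^2g_3$ is block diagonal, so $\bar g(X_i,Z_j)=0$ for $i\neq j$, whereas $\bar g(X_1,Z_1)=g_1(X_1,Z_1)$, $\bar g(X_2,Z_2)=f^2g_2(X_2,Z_2)$ and $\bar g(X_3,Z_3)=h^2g_3(X_3,Z_3)$; by Lemma \ref{thm:Ricci curvature} the same vanishing on mixed blocks holds for $\bar{\rm Ric}$. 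Second, $\bar{\rm P}(X,Y)Z=-\bar{\rm P}(Y,X)Z$, which is immediate from \eqref{pseudo projective curvature tensor} together with the antisymmetry of $\bar{\rm R}$ in its first two arguments. Hence, decomposing arbitrary vector fields as in Notation \ref{notation1}(v) and using trilinearity, it suffices to evaluate $\bar{\rm P}$ on triples $(X_i,Y_j,Z_k)$ with $i\le j$.

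I would then run through these triples. For each one, replace $\bar{\rm R}(X_i,Y_j)Z_k$ by the relevant line of Lemma \ref{thm:Riemannian curvature} (this is $0$ unless that index pattern occurs there), replace $\bar{\rm Ric}(Y_j,Z_k)$ and $\bar{\rm Ric}(X_i,Z_k)$ by the relevant lines of Lemma \ref{thm:Ricci curvature} (again $0$ on mixed blocks), and replace $\bar g(Y_j,Z_k)$ and $\bar g(X_i,Z_k)$ by the block expressions above. Every index pattern outside the eleven listed cases then collapses to $0$: for instance, for $(X_1,Y_1,Z_2)$ each of $\bar{\rm R}(X_1,Y_1)Z_2$, $\bar{\rm Ric}(Y_1,Z_2)$, $\bar{\rm Ric}(X_1,Z_2)$, $\bar g(Y_1,Z_2)$, $\bar g(X_1,Z_2)$ vanishes, and likewise for $(X_2,Y_2,Z_1)$, $(X_3,Y_3,Z_1)$, $(X_1,Y_2,Z_3)$, and the remaining ``off-list'' patterns. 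For the eleven surviving patterns this substitution yields exactly the stated right-hand sides. Two small points deserve care: the factors $f^2$ and $h^2$ in formulas (2), (4), (8), (9), (10), (11) arise precisely from the conformal rescaling of $\bar g$ in the last term of \eqref{pseudo projective curvature tensor}; and since the two $\bar{\rm Ric}$-terms in \eqref{pseudo projective curvature tensor} enter with opposite signs, one must track which vector field occupies which slot when $i\ne j$, which is exactly why the surviving $\beta$-terms are grouped the way they are in (3)--(10).

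I do not anticipate a genuine obstacle here: the argument is purely computational, and its length rather than any subtlety is the difficulty. The two things to watch are that the lists of nonzero components of $\bar{\rm R}$ and $\bar{\rm Ric}$ in Lemmas \ref{thm:Riemannian curvature}--\ref{thm:Ricci curvature} are used in full, so that no nonzero contribution is silently dropped, and that the antisymmetry $\bar{\rm P}(X,Y)Z=-\bar{\rm P}(Y,X)Z$ is applied consistently, so that patterns such as $(X_3,Y_1,Z_k)$ are not treated as new cases but recovered from (5), (6), (9) up to sign.
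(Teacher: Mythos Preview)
Your proposal is correct and matches the paper's approach: the paper states Theorem \ref{thm:projective curvature} without an explicit proof, treating it as a direct consequence of substituting the curvature formulas of Lemmas \ref{thm:Riemannian curvature} and \ref{thm:Ricci curvature} into the defining identity \eqref{pseudo projective curvature tensor}, together with the block-diagonal structure of $\bar g$. Your systematic bookkeeping via the antisymmetry $\bar{\rm P}(X,Y)Z=-\bar{\rm P}(Y,X)Z$ and the vanishing of mixed $\bar{\rm Ric}$ and $\bar g$ blocks is exactly the computation the paper leaves implicit.
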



Now, we assume that the  sequential warped product
$(\bar{M}, \bar{g})$ of the form $\bar{M}=(M_1\times_f M_2)\times_h M_3$ equipped with
$\bar{g}=(g_1\oplus  f^2 g_2 ) \oplus h^2 g_3$ is pseudo-projectively flat, i.e. $\bar{P}=0$ on $(\bar{M}, \bar{g})$.

After setting equations (1) and (11) of Theorem \ref{thm:projective curvature},
contract the resulting equation over $Y_1$ and $Z_1,$ we obtain that:

\begin{multline} \label{EqnI}
(\alpha-\beta){\rm Ric}_1(X_1,W_1) +
\beta m_2 \frac{1}{f} {\rm H}_1^f(X_1,W_1) + \beta m_3 \frac{1}{h}{\rm H}^h(X_1,W_1) \\
=\Bigl[\frac{\tau}{n}\bigl(\frac{\alpha}{n-1}+\beta \bigl)(m_1-1)-\beta \tau_1 + \beta m_2 \frac{1}{f}
\Delta_1 f + \beta m_3 \frac{1}{h} \Delta_1 h \Bigl]g_1(X_1,W_1).
\end{multline}
Since $\bar{{\rm P}}=0,$ by (3) of Theorem $\ref{thm:projective curvature}$ we have:
\begin{multline} \label{EqnII}
\frac{\alpha}{f}{\rm H}_1^f(X_1,Z_1) - \beta{\rm Ric}_1(X_1,Z_1)+
\beta \frac{m_2}{f}{\rm H}_1^f(X_1,Z_1)+\beta \frac{m_3}{h}{\rm
H}^h(X_1,Z_1) \\
+\frac{\tau}{n} \bigl(\frac{\alpha}{n-1}+\beta \bigl) g_1(X_1,Z_1)=0.
\end{multline}
By combining equations \eqref{EqnI} and \eqref{EqnII} also noting that
$\alpha \neq 0$ we obtain that
\begin{equation}
{\rm Ric}_1 - \frac{1}{f} {\rm H}^f_1 = \frac{\lambda_1}{\alpha}g_1,
\end{equation}
where $$\lambda_1=\frac{\tau}{n} \bigl(\frac{\alpha}{n-1}+\beta
\bigl)m_1-\beta \tau_1 + \beta \frac{m_2}{f} \Delta_1f + \beta
\frac{m_3}{h} \Delta_1h. $$
Thus, by $\eqref{prem:eq.1}$, the above equation can be expressed as
$$ {\rm Ric}_1 - {\rm H}^{\ln f}_1 - {\rm d}( \ln f)
\otimes {\rm d}(\ln f) = \frac{\lambda_1}{\alpha}g_1. $$
More explicitly, $(M_1,g_1,-\ln f, \lambda_1/\alpha)$ is a
generalized quasi-Einstein manifold in the sense of Catino, where
the potential function is determined in terms of the warping
function.
\\
Again since $\bar{{\rm P}}=0,$ the item (2) of Theorem \ref{thm:projective curvature}
becomes:
\begin{eqnarray*} - \alpha{\rm R}_2(X_2,Y_2)Z_2 & = & - \alpha \|\nabla_1 f\|^2
[g_2(Y_2,Z_2)X_2-g_2(X_2,Z_2)Y_2] \\
& + & \beta[{\rm Ric}_2(Y_2,Z_2)X_2 - f^\sharp g_2(Y_2,Z_2)X_2 - \frac{m_3}{h}{\rm H}^h(Y_2,Z_2)X_2] \\
& - & \beta[{\rm Ric}_2(X_2,Z_2)Y_2 - f^\sharp g_2(X_2,Z_2)Y_2 - \frac{m_3}{h}{\rm H}^h(X_2,Z_2)Y_2] \\
& - & \frac{\tau}{n}\bigl( \frac{\alpha}{n-1}+\beta \bigl) f^2 [g_2(Y_2,Z_2)X_2-g_1(X_2,Z_2)Y_2].
\end{eqnarray*}
Contracting the last equation over $Y_2$ and $Z_2$ and then using the identity (4) of Lemma \ref{thm:hessian}, we obtain:
\begin{equation} \label{2P}
(\alpha-\beta){\rm Ric}_2(X_2,W_2)+ \frac{\mu}{h} {\rm H}^h_2(X_2,W_2) = \lambda g_2(X_2,W_2),
\end{equation}
where \begin{multline*}
\lambda_2 = -\beta \tau_2 + \beta m_2 f^\sharp + \beta m_3 (m_2 f \nabla h(f)+f^2\Delta h) \frac{1}{h} -
\beta f^\sharp \\
- \beta m_3 \frac{f}{h} \nabla h(f) + \alpha \|\nabla_1 f\|^2 +
\frac{\tau}{n} \bigl( \frac{\alpha}{n-1} + \beta \bigl) f^2
\end{multline*}

and $\displaystyle{\mu (\alpha-\beta)=\beta m_3 f^2}.$

If $\alpha \neq \beta,$ then by  \eqref{2P} and \eqref{prem:eq.1}, we can state
$${\rm Ric}_2 + {\rm H}^{\mu \ln h} + \frac{1}{\mu} {\rm d}(\mu \ln h) \otimes {\rm d}(\mu \ln h)
= \lambda_2 g_2.$$
More explicitly, $(M_2,g_2,\mu \ln h, \lambda_2)$ is a
generalized quasi-Einstein manifold in the sense of Catino, where
the potential function is determined in terms of the warping
function. If $\alpha = \beta,$ then $\eqref{2P}$ reduces to the relation of conformal gradient soliton.

Notice that (6) and (7) of Theorem \ref{thm:projective curvature} imply that
${\rm H}^h(X_1,Z_2) = 0$ and ${\rm H}^h(X_2,Z_1)=0.$
\\
Lastly, since $\bar{{\rm P}}=0,$ the item (11) of Theorem \ref{thm:projective curvature}
implies:

\begin{eqnarray*} -\alpha{\rm R}_3(X_3,Y_3)Z_3 & = & \beta[{\rm Ric}_3(Y_3,Z_3)X_3 - h^\sharp g_3(Y_3,Z_3)X_3] \\
& - & \beta[{\rm Ric}_3(X_3,Z_3)Y_3 - h^\sharp g_3(X_3,Z_3)Y_3] \\
& - & \frac{\tau}{n}\bigl( \frac{\alpha}{n-1}+\beta \bigl) h^2 [g_3(Y_3,Z_3)X_3-g_3(X_3,Z_3)Y_3].
\end{eqnarray*}
After contracting the last equation over $Y_3$ and $Z_3$, we have
\begin{eqnarray*} (\alpha - \beta) {\rm Ric}_3  = [\frac{\tau}{n} \bigl( \frac{\alpha}{n-1}+\beta \bigl)
h^2(m_3-1)-  \beta \tau_3 + \beta h^\sharp (m_3-1)] g_3.
\end{eqnarray*}
That is, $(M_3,g_3)$ is Einstein.

All the above findings can be summarized as:

\begin{theorem} \label{thm:pseudo projective flat}
If  a sequential warped product
$(\bar{M}, \bar{g})$ of the form $\bar{M}=(M_1\times_f M_2)\times_h M_3$ equipped with
$\bar{g}=(g_1\oplus  f^2 g_2 ) \oplus h^2 g_3$ is pseudo-projectively flat, then
\begin{enumerate}
\item $(M_1,g_1,-\ln f, \lambda_1/\alpha)$ is a
generalized quasi-Einstein manifold,
\item $(M_2,g_2,\mu \ln h, \lambda_2)$ is a
generalized quasi-Einstein manifold, provided that $\alpha \neq \beta$. Otherwise, it is conformal gradient soliton.
\item $(M_3,g_3)$ is Einstein,  provided that $\alpha \neq \beta$ and
\item $\displaystyle{\frac{1}{f}{\rm H}_1^f = \frac{1}{h}{\rm H}_1^h}$ on $M_1.$
\end{enumerate}
\end{theorem}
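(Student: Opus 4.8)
\emph{Proof proposal.} The plan is to assume $\bar{{\rm P}}=0$ identically on $(\bar M,\bar g)$ and to read off, factor by factor, the claimed structure from the eleven components of the pseudo-projective curvature listed in Theorem \ref{thm:projective curvature}, so the argument is a sequence of contractions and substitutions into those formulas.

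\textbf{Factor $M_1$.} First I would add items (1) and (11) of Theorem \ref{thm:projective curvature} (with $\bar{{\rm P}}=0$) and trace the resulting tensor identity over the pair $(Y_1,Z_1)$ against an orthonormal basis of $M_1$; this produces the scalar relation \eqref{EqnI} among ${\rm Ric}_1$, $\tfrac1f{\rm H}^f_1$ and $\tfrac1h{\rm H}^h$. Independently, item (3) with $\bar{{\rm P}}=0$ gives the pointwise identity \eqref{EqnII}. Eliminating the common combination $\beta\,{\rm Ric}_1-\beta\tfrac{m_2}{f}{\rm H}^f_1-\beta\tfrac{m_3}{h}{\rm H}^h$ between \eqref{EqnI} and \eqref{EqnII} and dividing by $\alpha\neq 0$ yields ${\rm Ric}_1-\tfrac1f{\rm H}^f_1=\tfrac{\lambda_1}{\alpha}g_1$. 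Applying \eqref{prem:eq.1} with $m=1$, $\varphi=f$, the term $\tfrac1f{\rm H}^f_1$ becomes ${\rm H}^{\ln f}_1+{\rm d}(\ln f)\otimes{\rm d}(\ln f)$, so the equation is exactly the Catino generalized quasi-Einstein equation \eqref{generalized quasi Einstein} on $(M_1,g_1)$ with potential $-\ln f$ and $\lambda=\lambda_1/\alpha$, giving (1).

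\textbf{Factor $M_2$, factor $M_3$, and the Hessian constraint.} For $M_2$ I would take item (2) with $\bar{{\rm P}}=0$, move $-\alpha{\rm R}_2(X_2,Y_2)Z_2$ to one side, and trace over $(Y_2,Z_2)$; the only nontrivial piece is the $g_2$-trace of ${\rm H}^h$ restricted to $M_2$, which by Lemma \ref{thm:hessian}(4) splits into an $M_2$-Laplacian term plus the $m_2f\nabla h(f)+f^2\Delta h$ and $\tfrac{f}{h}\nabla h(f)$ contributions appearing in $\lambda_2$. This gives \eqref{2P}, namely $(\alpha-\beta){\rm Ric}_2+\tfrac{\mu}{h}{\rm H}^h_2=\lambda_2 g_2$ with $\mu(\alpha-\beta)=\beta m_3 f^2$. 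If $\alpha\neq\beta$ I divide by $\alpha-\beta$ and invoke \eqref{prem:eq.1} with $m=\mu$, $\varphi=h$ to recast it as the Catino equation with potential $\mu\ln h$ and constant $\lambda_2$; if $\alpha=\beta$ then $\beta m_3 f^2=0$, the ${\rm Ric}_2$- and $\mu$-terms collapse, and \eqref{2P} degenerates into ${\rm H}^h_2=(\text{function})\,g_2$, which is the conformal gradient soliton equation \eqref{conformal gradient soliton}; this proves (2). For $M_3$ I use item (11) with $\bar{{\rm P}}=0$, isolate $-\alpha{\rm R}_3$, and trace over $(Y_3,Z_3)$; since $h$ is defined on $M_1\times M_2$ no Hessian term survives, so $(\alpha-\beta){\rm Ric}_3=(\text{function})\,g_3$, which is (3) when $\alpha\neq\beta$. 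Finally, for (4) I compare items (3) and (5) of Theorem \ref{thm:projective curvature}: under $\bar{{\rm P}}=0$ their ${\rm Ric}_1$-, ${\rm H}^f_1$-, ${\rm H}^h$- and $g_1$-parts coincide, so subtracting leaves $\tfrac{\alpha}{f}{\rm H}^f_1(X_1,Z_1)-\tfrac{\alpha}{h}{\rm H}^h(X_1,Z_1)=0$, and since ${\rm H}^h(X_1,Z_1)={\rm H}^h_1(X_1,Z_1)$ by Lemma \ref{thm:hessian}(1) and $\alpha\neq 0$ this is precisely $\tfrac1f{\rm H}^f_1=\tfrac1h{\rm H}^h_1$ on $M_1$. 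Along the way, items (6) and (7) record that the mixed Hessian ${\rm H}^h$ between distinct factors vanishes.

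\textbf{Main obstacle.} The one genuinely delicate computation is the $M_2$-trace: because the sequential structure places $h$ on $M_1\times M_2$, one must carefully split $\tfrac1h\,{\rm tr}_{g_2}{\rm H}^h$ via Lemma \ref{thm:hessian}(4) into its $M_2$-Laplacian part and the $\nabla h(f)$-corrections, and any slip there corrupts $\lambda_2$ and the coefficient $\mu$. A secondary point is the degenerate case $\alpha=\beta$, where $\mu$ is undefined: one must argue directly from $\beta m_3 f^2=(\alpha-\beta)\mu=0$ that \eqref{2P} collapses to the conformal-gradient-soliton form, rather than attempting to divide by $\alpha-\beta$. The remaining manipulations are routine.
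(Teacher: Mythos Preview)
Your proposal follows the paper's argument essentially step for step: trace item (1) of Theorem~\ref{thm:projective curvature} to obtain \eqref{EqnI}, read off \eqref{EqnII} from item (3), combine for the $M_1$ claim; trace item (2) through Lemma~\ref{thm:hessian}(4) to reach \eqref{2P} for the $M_2$ claim; trace item (11) for the $M_3$ claim; and record the mixed Hessian vanishing from items (6)--(7).

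Two small remarks. First, your derivation of conclusion (4) by subtracting items (3) and (5) is correct and actually supplies a step the paper's displayed proof omits; note, however, that it relies on reading the leading term of item (5) as $\tfrac{\alpha}{h}{\rm H}^h(X_1,Z_1)Y_3$ (which is what Lemma~\ref{thm:Riemannian curvature}(5) gives) rather than the printed $\tfrac{\alpha}{f}{\rm H}_1^f$, so you are implicitly correcting a typo. Second, in the degenerate case $\alpha=\beta$ your sentence ``then $\beta m_3 f^2=0$'' is not right: $\beta,\,m_3,\,f$ are all nonzero, so $\beta m_3 f^2\neq 0$ and $\mu$ is simply undefined. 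The clean argument is to go back to the pre-$\mu$ form of the traced identity, $(\alpha-\beta)\,{\rm Ric}_2+\tfrac{\beta m_3}{h}\,{\rm H}^h_2=(\cdots)g_2$; setting $\alpha=\beta$ kills the Ricci term while the Hessian coefficient survives, leaving precisely ${\rm H}^h_2=(\text{function})\,g_2$, the conformal gradient soliton relation. With that adjustment your outline matches the paper.
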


\begin{remark}
If $\alpha=1$ and $\beta=-1/(n-1)$, then $\bar{{\rm P}}$ is reduced to the
projective curvature tensor. Hence, Theorem \ref{thm:pseudo projective flat} holds also for
projectively flat sequential warped products.
\end{remark}
Note that if the Ricci tensor of the manifold satisfies the condition
\begin{align}
\label{codazzi}
(\nabla_X {\rm Ric})(Y,Z)=(\nabla_Y {\rm Ric})(X,Z),
\end{align}
which means that the manifold has the Codazzi type Ricci tensor, \cite{Besse2008,ferus}.  Hence,  we can state the following:

\begin{corollary} Let  $(\bar{M}, \bar{g})$ be a sequential warped product
of the form $\bar{M}=(M_1\times_f M_2)\times_h M_3.$ Assume that $(\bar{M}, \bar{g})$
is pseudo-projectively conservative sequential warped product. If
$\alpha+\beta \neq 0,$ and $(\bar{M}, \bar{g})$ is of constant scalar
curvature then the Ricci tensor of $(\bar{M}, \bar{g})$ is of Codazzi type.
\end{corollary}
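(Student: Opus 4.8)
The plan is to observe that this corollary is really a general identity about the pseudo-projective tensor, the sequential-warped-product hypothesis playing no role beyond fixing the notation. I would first establish that for \emph{any} $n$-dimensional pseudo-Riemannian manifold $(M,g)$ with $n>2$,
\begin{multline*}
(\operatorname{div}{\rm P})(X,Y,Z)=(\alpha+\beta)\bigl[(\nabla_X{\rm Ric})(Y,Z)-(\nabla_Y{\rm Ric})(X,Z)\bigr]\\
-\frac{1}{n}\Bigl(\frac{\alpha}{n-1}+\beta\Bigr)\bigl[(d\tau)(X)\,g(Y,Z)-(d\tau)(Y)\,g(X,Z)\bigr],
\end{multline*}
and then read off the statement. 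Here a manifold is \emph{pseudo-projectively conservative} when $\operatorname{div}{\rm P}=0$, the divergence being taken by contracting the differentiated slot against the vector output: writing ${\rm P}$ as the $(0,4)$-tensor ${\rm P}(X,Y,Z,W)=g({\rm P}(X,Y)Z,W)$, one sets $(\operatorname{div}{\rm P})(X,Y,Z)=\sum_i\varepsilon_i(\nabla_{e_i}{\rm P})(X,Y,Z,e_i)$ for a local orthonormal frame $\{e_i\}$ with $\varepsilon_i=g(e_i,e_i)$.

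To obtain the displayed formula I would differentiate the defining relation \eqref{pseudo projective curvature tensor} covariantly and trace, using three standard facts. First, $\nabla g=0$, so in the two algebraic terms of \eqref{pseudo projective curvature tensor} only ${\rm Ric}$ and $\tau$ get differentiated; after the contraction $\sum_i\varepsilon_i g(X,e_i)e_i=X$ the $\tau$-term contributes exactly the $d\tau$-expression above, and the $\beta[{\rm Ric}(Y,Z)X-{\rm Ric}(X,Z)Y]$-term contributes $\beta\bigl[(\nabla_X{\rm Ric})(Y,Z)-(\nabla_Y{\rm Ric})(X,Z)\bigr]$. Second, the once-contracted second Bianchi identity, in the curvature and Ricci conventions fixed in Notation \ref{notation1}, gives $\sum_i\varepsilon_i(\nabla_{e_i}{\rm R})(X,Y,Z,e_i)=(\nabla_X{\rm Ric})(Y,Z)-(\nabla_Y{\rm Ric})(X,Z)$, so the $\alpha{\rm R}$-term contributes $\alpha$ times the same Cotton-type expression. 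Adding the three contributions gives the identity. (The twice-contracted Bianchi identity $2\operatorname{div}{\rm Ric}=d\tau$ is not needed for this particular contraction, although it is what makes the formula internally consistent.)

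Finally I would specialize to $(\bar M,\bar g)$. Constant scalar curvature gives $d\bar\tau=0$, so the second bracket of the identity vanishes; the conservative hypothesis kills the left-hand side; and since $\alpha+\beta\neq0$ we may divide to conclude $(\bar\nabla_X\bar{\rm Ric})(Y,Z)=(\bar\nabla_Y\bar{\rm Ric})(X,Z)$ for all $X,Y,Z$, which is precisely condition \eqref{codazzi}. Hence $\bar{\rm Ric}$ is of Codazzi type. The one genuinely delicate point --- and the step I would check most carefully --- is the sign bookkeeping in the once-contracted Bianchi identity under the paper's conventions, because it is exactly the relative sign of the $\alpha{\rm R}$-contribution and the $\beta$-Ricci-contribution that forces the coefficient to be $\alpha+\beta$ rather than $\alpha-\beta$, which is in turn what singles out $\alpha+\beta\neq0$ as the correct hypothesis. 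Everything else is a routine trace, and the signature factors $\varepsilon_i$ cancel throughout.
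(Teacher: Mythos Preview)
Your argument is correct and complete. The divergence identity you derive,
\[
(\operatorname{div}\bar{\rm P})(X,Y,Z)=(\alpha+\beta)\bigl[(\bar\nabla_X\bar{\rm Ric})(Y,Z)-(\bar\nabla_Y\bar{\rm Ric})(X,Z)\bigr]
-\tfrac{1}{n}\bigl(\tfrac{\alpha}{n-1}+\beta\bigr)\bigl[d\bar\tau(X)\bar g(Y,Z)-d\bar\tau(Y)\bar g(X,Z)\bigr],
\]
checks out under the paper's conventions (in particular the once-contracted Bianchi identity does yield the $+\alpha$ contribution, so the coefficient is indeed $\alpha+\beta$), and the specialization to constant $\bar\tau$ together with $\alpha+\beta\neq0$ immediately gives the Codazzi condition. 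Your observation that the sequential-warped-product hypothesis is inert is also right: nothing in the argument uses the product structure.

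The paper takes a nominally different route, invoking the Weyl tensor: it asserts (in effect) that $\operatorname{div}\bar{\rm P}=0$ together with $\alpha+\beta\neq0$ forces $\operatorname{div}\bar{\rm C}=0$, and then relies on the standard fact that a divergence-free Weyl tensor on a manifold of constant scalar curvature has Codazzi Ricci tensor (since the Cotton tensor reduces to the skew-symmetrized covariant derivative of ${\rm Ric}$ when $d\tau=0$). Your direct computation bypasses the Weyl tensor entirely and is both shorter and more transparent; the paper's version, as written, is really a one-line hint that presupposes the reader knows the $\bar{\rm P}$--$\bar{\rm C}$ divergence relationship, whereas yours is self-contained.
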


\begin{proof} Note that if ${\rm div}(\bar{{\rm P}})=0,$ then ${\rm div}\bar{{\rm C}}=0$ or
$\alpha+\beta \neq 0,$ where $\bar{{\rm C}}$ denotes for Weyl tensor.
\end{proof}

\section{Applications}

\subsection{Sequential Generalized Robertson-Walker Spacetimes}

A sequential generalized Robertson-Walker spacetime is a sequential warped product
of the form $\bar{M}=(I \times_f M_2)\times_h M_3$ endowed with the metric
$\bar{g}=(-\textrm{d}t^2\oplus  f^2 g_2 ) \oplus h^2 g_3.$

By using Thorem \ref{thm:projective curvature}, one can easily state the following:

\begin{theorem} \label{thm:projective curvature SGRW}
The non-zero components of the pseudo-projective curvature of a sequential generalized Robertson-Walker
spacetime $(\bar{M}, \bar{g})$ of the form $\bar{M}=(I \times_f M_2)\times_h M_3$ endowed with the metric
$\bar{g}=(-\textrm{d}t^2\oplus  f^2 g_2 ) \oplus h^2 g_3$ are as follow:
\begin{enumerate}
\item \begin{eqnarray*} \bar{{\rm P}}(X_2,Y_2)Z_2 & = & \alpha{\rm R}_2(X_2,Y_2)Z_2 + \alpha (\dot{f})^2
[g_2(Y_2,Z_2)X_2-g_2(X_2,Z_2)Y_2] \\
& + & \beta[{\rm Ric}_2(Y_2,Z_2)X_2 - f^\sharp g_2(Y_2,Z_2)X_2 - \frac{m_3}{h}{\rm H}^h(Y_2,Z_2)X_2] \\
& - & \beta[{\rm Ric}_2(X_2,Z_2)Y_2 - f^\sharp g_2(X_2,Z_2)Y_2 - \frac{m_3}{h}{\rm H}^h(X_2,Z_2)Y_2] \\
& - & \frac{\tau}{n}\bigl( \frac{\alpha}{n-1}+\beta \bigl) f^2 [g_2(Y_2,Z_2)X_2-g_1(X_2,Z_2)Y_2],
\end{eqnarray*}
\item \begin{eqnarray*} \bar{{\rm P}}(\partial_t,Y_2)\partial_t & = & - \alpha \frac{\ddot{f}}{f}Y_2
- \beta \bigl[m_2 \frac{\ddot{f}}{f} + m_3 \frac{1}{h} \frac{\partial^2 h}{\partial t^2} \bigl] Y_2  -  \frac{\tau}{n} \bigl( \frac{\alpha}{n-1}+\beta \bigl) Y_2,
\end{eqnarray*}
\item \begin{eqnarray*} \bar{{\rm P}}(\partial_t,Y_2)Z_2 & = & -\alpha f \ddot{f} g_2(Y_2,Z_2) \partial_t -\frac{\tau}{n} \bigl( \frac{\alpha}{n-1}+\beta \bigl) f^2 g_2(Y_2,Z_2) \partial_t\\
& + & \beta [{\rm Ric}_2(Y_2,Z_2)- f^\sharp g_2(Y_2,Z_2) - \frac{m_3}{h}{\rm H}^h(Y_2,Z_2)] \partial_t,
\end{eqnarray*}
\item \begin{eqnarray*} \bar{{\rm P}}(\partial_t,Y_3)\partial_t & = & - \frac{\alpha}{h} \frac{\partial^2 h}{\partial t^2} Y_3  -  \beta \bigl[m_2 \frac{\ddot{f}}{f} - m_3 \frac{1}{h} \frac{\partial^2 h}{\partial t^2} \bigl] Y_3 -  \frac{\tau}{n} \bigl( \frac{\alpha}{n-1}+\beta \bigl) Y_3,
\end{eqnarray*}
\item \begin{eqnarray*} \bar{{\rm P}}(X_2,Y_3)Z_2 & = & \frac{\alpha}{h}{\rm H}_1^f(X_2,Z_2)Y_3 + \frac{\tau}{n} \bigl( \frac{\alpha}{n-1}+\beta \bigl) f^2 g_2(X_2,Z_2)Y_3 \\
& - & \beta [{\rm Ric}_2(X_2,Z_2)Y_3 - f^\sharp g_2(X_2,Z_2)Y_3 - \frac{m_3}{h}{\rm H}^h(X_2,Z_2)Y_3 ],
\end{eqnarray*}
\item \begin{eqnarray*} \bar{{\rm P}}(\partial_t,Y_3)Z_3 & = & -\alpha h g_3(Y_3,Z_3) \nabla_{\partial_t} \nabla h - \frac{\tau}{n} \bigl( \frac{\alpha}{n-1}+\beta \bigl) h^2 g_3(Y_3,Z_3) \partial_t \\
& + & \beta [{\rm Ric}_3(Y_3,Z_3)- h^\sharp g_3(Y_3,Z_3)] \partial_t,
\end{eqnarray*}
\item \begin{eqnarray*} \bar{{\rm P}}(X_2,Y_3)Z_3 & = & -\alpha h g_3(Y_3,Z_3) \nabla_{X_2} \nabla h -\frac{\tau}{n} \bigl( \frac{\alpha}{n-1}+\beta \bigl) h^2 g_3(Y_3,Z_3)X_2\\
& + & \beta [{\rm Ric}_3(Y_3,Z_3)X_2 - h^\sharp g_3(Y_3,Z_3)X_2],
\end{eqnarray*}
\item \begin{eqnarray*} \bar{{\rm P}}(X_3,Y_3)Z_3 & = & \alpha{\rm R}_3(X_3,Y_3)Z_3-\frac{\tau}{n}\bigl( \frac{\alpha}{n-1}+\beta \bigl) h^2 [g_3(Y_3,Z_3)X_3-g_3(X_3,Z_3)Y_3] \\
& + & \beta[{\rm Ric}_3(Y_3,Z_3)X_3 - h^\sharp g_3(Y_3,Z_3)X_3] \\
& - & \beta[{\rm Ric}_3(X_3,Z_3)Y_3 - h^\sharp g_3(X_3,Z_3)Y_3],
\end{eqnarray*}
\end{enumerate}
where $X_i \in {\rm T}(M_i)$ for any $i=1,2,3$ and $n=1+m_2+m_3.$
\end{theorem}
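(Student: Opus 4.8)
The plan is to obtain Theorem~\ref{thm:projective curvature SGRW} as a direct specialization of Theorem~\ref{thm:projective curvature}: a sequential generalized Robertson--Walker spacetime $\bar{M}=(I\times_f M_2)\times_h M_3$ is nothing but the sequential warped product of the opening definition in the case $(M_1,g_1)=(I,-\textrm{d}t^2)$, so that $m_1=1$ and $n=1+m_2+m_3$. Thus it suffices to insert the geometry of this one-dimensional Lorentzian base into the eleven formulas of Theorem~\ref{thm:projective curvature} and to simplify.

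First I would collect the elementary facts about $(I,-\textrm{d}t^2)$. Since $\dim M_1=1$ we have ${\rm R}_1\equiv 0$, ${\rm Ric}_1\equiv 0$ and $\tau_1=0$; the metric coefficient of $-\textrm{d}t^2$ is constant, so $\nabla^1_{\partial_t}\partial_t=0$, whence for every smooth function $\varphi$ on $\bar{M}$ one has ${\rm H}_1^{\varphi}(\partial_t,\partial_t)=\partial_t\partial_t(\varphi)$, which by Lemma~\ref{thm:hessian}(1) coincides with ${\rm H}^{\varphi}(\partial_t,\partial_t)$. From $g_1(\partial_t,\partial_t)=-1$ one gets ${\rm grad}_1\varphi=-\dot{\varphi}\,\partial_t$, hence $\|{\rm grad}_1 f\|^2=-(\dot f)^2$, $\Delta_1\varphi=-\ddot{\varphi}$ and $\nabla^1_{\partial_t}{\rm grad}_1 f=-\ddot f\,\partial_t$. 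Finally, for $X_1,Y_1,Z_1\in{\rm T}(I)$ the expression $g_1(Y_1,Z_1)X_1-g_1(X_1,Z_1)Y_1$ vanishes identically, since ${\rm T}(I)$ is one-dimensional.

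Then I would run through Theorem~\ref{thm:projective curvature} term by term. The purely first-factor component $\bar{{\rm P}}(X_1,Y_1)Z_1$ vanishes because of the three curvature vanishings together with the last remark above. In every component carrying a ${\rm T}(M_1)$-entry one takes that entry to be $\partial_t$ and applies the substitutions ${\rm Ric}_1=0$, ${\rm H}_1^f(\partial_t,\partial_t)=\ddot f$, $g_1(\partial_t,\partial_t)=-1$, $\Delta_1 f=-\ddot f$ and $\nabla^1_{\partial_t}{\rm grad}_1 f=-\ddot f\,\partial_t$; in $\bar{{\rm P}}(X_2,Y_2)Z_2$ and $\bar{{\rm P}}(X_3,Y_3)Z_3$ one only replaces $\|{\rm grad}_1 f\|^2$ by $-(\dot f)^2$. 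Collecting the resulting expressions then yields the eight displayed formulas.

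Since the argument is purely computational, the only point demanding genuine care is the sign bookkeeping forced by the Lorentzian signature of the first factor --- the value $g_1(\partial_t,\partial_t)=-1$ and the sign reversals it induces in $\|{\rm grad}_1 f\|^2$, in $\Delta_1$, and in $\nabla^1_{\partial_t}{\rm grad}_1 f$. Nothing beyond Theorem~\ref{thm:projective curvature} and Lemma~\ref{thm:hessian} is needed.
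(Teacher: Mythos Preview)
Your proposal is correct and coincides with the paper's own approach: the paper simply remarks that Theorem~\ref{thm:projective curvature SGRW} follows from Theorem~\ref{thm:projective curvature} by specializing to $(M_1,g_1)=(I,-\mathrm{d}t^2)$, and you carry out exactly that specialization, recording the one-dimensional Lorentzian identities (${\rm R}_1={\rm Ric}_1=0$, $g_1(\partial_t,\partial_t)=-1$, $\|{\rm grad}_1 f\|^2=-(\dot f)^2$, etc.) needed to simplify each item.
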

Now, we will consider implications of the case given by $\bar{{\rm P}}=0.$
\\
Equations (2) and (4) of Theorem \ref{thm:projective curvature SGRW} take
the following forms, respectively:
\begin{eqnarray*}
- \alpha \frac{\ddot{f}}{f}
- \beta \bigl[m_2 \frac{\ddot{f}}{f} + m_3 \frac{1}{h} \frac{\partial^2 h}{\partial t^2}]
- \frac{\tau}{n} \bigl( \frac{\alpha}{n-1}+\beta \bigl) & = & 0, \\
-\frac{\alpha}{h} \frac{\partial^2 h}{\partial t^2} -
\beta \bigl[m_2 \frac{\ddot{f}}{f} + m_3 \frac{1}{h} \frac{\partial^2 h}{\partial t^2}]
- \frac{\tau}{n} \bigl( \frac{\alpha}{n-1}+\beta \bigl) & = & 0.
\end{eqnarray*}
In view of the last two equations, one can easily obtain: $$\frac{\ddot{f}}{f} = \frac{1}{h} \frac{\partial^2 h}{\partial t^2}.$$

Similarly, equations (3) and (5) of Theorem \ref{thm:projective curvature SGRW} become
the followings, respectively:
\begin{eqnarray*}
-\alpha f \ddot{f} g_2 + \beta [{\rm Ric}_2 - f^\sharp g_2 - \frac{m_3}{h}{\rm H}^h]
- \frac{\tau}{n} \bigl( \frac{\alpha}{n-1}+\beta \bigl) f^2 g_2 & = & 0, \\
\frac{\alpha}{h}{\rm H}_1^f - \beta [{\rm Ric}_2 - f^\sharp g_2 - \frac{m_3}{h}{\rm H}^h]
 +  \frac{\tau}{n} \bigl( \frac{\alpha}{n-1}+\beta \bigl) f^2 g & = & 0.
\end{eqnarray*}
Hence, from  the last two equations $$ \frac{1}{h} {\rm H}^h = f \ddot{f} g_2 \quad \text{on} \quad M_2. \quad $$
The last equation imply that
$${\rm H}_2^h = \bigl[ h \frac{\ddot{f}}{f} - \frac{1}{f} \nabla h(f) \bigl]g_2.$$
That is, $(M_2,g_2)$ is a conformal gradient soliton.
Also, by contracting the previous equation, we get:
$$\Delta_2 h = m_2 \bigl[  h \frac{\ddot{f}}{f} - \frac{1}{f} \nabla h(f) \bigl].$$

Moreover, from equations (6) and (7) of Theorem \ref{thm:projective curvature SGRW}, the warping function $h \in \mathcal C^\infty(M_1 \times M_2)$ satisfies
we have:
$$ \nabla_{\partial t} \nabla h = \nabla_{X_2} \nabla h. $$
\\
Again setting $\bar{{\rm P}}=0$ in equation (3) of Theorem \ref{thm:projective curvature SGRW}
and then combining the resulting equation with (1) of Theorem \ref{thm:projective curvature SGRW}
(note that $\alpha \neq 0$) result in:
$${\rm R}_2(X_2,Y_2)Z_2 = -\bigl( f\ddot{f}+(\dot{f})^2 \bigl)
[g_2(Y_2,Z_2)X_2 - g_2(X_2,Z_2)Y_2]. $$

Hence, $(M_2,g_2)$ is of constant sectional curvature with
$k_2=-\bigl( f\ddot{f}+(\dot{f})^2 \bigl)$ and thus

$${\rm Ric}_2(X_2,Y_2)=-\bigl( f\ddot{f}+(\dot{f})^2 \bigl)(m_2-1)g_2(X_2,Y_2).$$
That is, $(M_2,g_2)$ is Einstein. Similarly, $(M_3,g_3)$ is Einstein as well.
\\
Therefore, we can state the following result.

\begin{theorem} \label{thm:flat projective curvature SGRW}
Let $(\bar{M}, \bar{g})$ be a sequential generalized Robertson-Walker spacetime
of the form $\bar{M}=(I \times_f M_2)\times_h M_3$ endowed with the metric
$\bar{g}=(-\textrm{d}t^2\oplus  f^2 g_2 ) \oplus h^2 g_3.$ If $(\bar{M}, \bar{g})$
is pseudo-projectively flat, then
\begin{enumerate}
\item the warping functions $f$ and $h$ satisfy
$$\frac{\ddot{f}}{f} = \frac{1}{h} \frac{\partial^2 h}{\partial t^2},$$
\item $(M_2,g_2)$ is a conformal gradient soliton and both $(M_2,g_2)$ and $(M_3,g_3)$ are Einstein.
\end{enumerate}
\end{theorem}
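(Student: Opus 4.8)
The plan is to extract all conclusions directly from the component formulas of Theorem~\ref{thm:projective curvature SGRW}, imposing $\bar{{\rm P}}=0$ and choosing, for each assertion, a pair of components whose linear combination annihilates the undetermined curvature and Ricci terms. First I would subtract item~(4) from item~(2): both carry the common block $\beta\bigl(m_2\tfrac{\ddot f}{f}+m_3\tfrac1h\partial_t^2 h\bigr)Y_2$ together with $\tfrac{\tau}{n}\bigl(\tfrac{\alpha}{n-1}+\beta\bigr)Y_2$, so the difference collapses to $\alpha\bigl(\tfrac{\ddot f}{f}-\tfrac1h\partial_t^2 h\bigr)Y_2=0$; since $\alpha\neq 0$ this is conclusion~(1).

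Next, for the fibre $M_2$, I would add items~(3) and~(5). With $\bar{{\rm P}}=0$ they contain the block $\beta[{\rm Ric}_2-f^\sharp g_2-\tfrac{m_3}{h}{\rm H}^h]$ and the term $\tfrac{\tau}{n}\bigl(\tfrac{\alpha}{n-1}+\beta\bigr)f^2 g_2$ with opposite signs, so the sum leaves $\alpha\bigl(\tfrac1h{\rm H}^h-f\ddot f\,g_2\bigr)=0$ on $M_2$, i.e. $\tfrac1h{\rm H}^h=f\ddot f\,g_2$ along $M_2$-directions. Since $h\in\mathcal C^\infty(I\times M_2)$, item~(6) of Lemma~\ref{thm:hessian} gives ${\rm H}^h(X_2,Y_2)=f\,\nabla h(f)\,g_2(X_2,Y_2)+{\rm H}_2^h(X_2,Y_2)$; substituting this yields ${\rm H}_2^h=\bigl(h\tfrac{\ddot f}{f}-\tfrac1f\nabla h(f)\bigr)g_2$, which is exactly the shape of equation~\eqref{conformal gradient soliton}, so $(M_2,g_2)$ is a conformal gradient soliton.

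For the Einstein claims I would then read item~(3) (with $\bar{{\rm P}}=0$) as an expression for the block $\beta[{\rm Ric}_2-f^\sharp g_2-\tfrac{m_3}{h}{\rm H}^h]$ and feed it into item~(1); the $M_2$-Ricci and the $\tfrac{\tau}{n}$-terms cancel, leaving ${\rm R}_2(X_2,Y_2)Z_2=-(f\ddot f+\dot f^2)[g_2(Y_2,Z_2)X_2-g_2(X_2,Z_2)Y_2]$, so $(M_2,g_2)$ has constant sectional curvature $k_2=-(f\ddot f+\dot f^2)$ and hence ${\rm Ric}_2=-(m_2-1)(f\ddot f+\dot f^2)g_2$ is Einstein. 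Finally, setting item~(8) equal to zero and contracting over $Y_3,Z_3$ exactly as in the argument preceding Theorem~\ref{thm:pseudo projective flat} produces $(\alpha-\beta){\rm Ric}_3=\bigl(\tfrac{\tau}{n}(\tfrac{\alpha}{n-1}+\beta)h^2(m_3-1)-\beta\tau_3+\beta h^\sharp(m_3-1)\bigr)g_3$, so $(M_3,g_3)$ is Einstein.

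The one delicate point is the second step: the quantity $\tfrac1h\partial_t^2 h$ appearing in items~(2),(4) and the tensor ${\rm H}^h$ restricted to $M_2\times M_2$ in items~(3),(5) are Hessians of the same function $h$ on the base $I\times M_2$ but evaluated in different directions, and the passage to the intrinsic Hessian ${\rm H}_2^h$ through Lemma~\ref{thm:hessian}(6) is precisely where the $\nabla h(f)$ correction term enters. Keeping this identification straight is what makes the conformal-gradient-soliton equation come out in clean form; everything else reduces to algebraic elimination among the displayed components.
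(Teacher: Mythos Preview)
Your argument mirrors the paper's own proof step by step: the same pairs of components from Theorem~\ref{thm:projective curvature SGRW} --- (2) with (4) for the warping relation, (3) with (5) for the conformal gradient soliton, (3) fed into (1) for constant sectional curvature of $M_2$, and then (8) for $M_3$ --- are combined in the same way. Two trivial slips to correct: item~(4) carries $Y_3$, not $Y_2$ (you are really comparing the scalar coefficients, so this is harmless), and the Hessian decomposition you quote is item~(4) of Lemma~\ref{thm:hessian}, not item~(6).
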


If $f\ddot{f}+(\dot{f})^2 =0$, then $(f\dot{f})=$constant, which yields $f^2(t)=at +b$, for some constant $a, b$.  Thus, we can state the following:

\begin{corollary}
Let $(\bar{M}, \bar{g})$ be a sequential generalized Robertson-Walker spacetime
of the form $\bar{M}=(I \times_f M_2)\times_h M_3$ endowed with the metric
$\bar{g}=(-\textrm{d}t^2\oplus  f^2 g_2 ) \oplus h^2 g_3.$ If $(\bar{M}, \bar{g})$
is pseudo-projectively flat and $f^2(t)=at +b$, for some constant $a, b$,   then $(M_2,g_2)$ is flat.
\end{corollary}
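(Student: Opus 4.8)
The plan is to combine the two conclusions already established for pseudo-projectively flat sequential generalized Robertson-Walker spacetimes, namely that $(M_2,g_2)$ has constant sectional curvature $k_2 = -\bigl(f\ddot f + (\dot f)^2\bigr)$ and, separately, that the quantity $f\ddot f + (\dot f)^2$ governs this curvature. The additional hypothesis $f\ddot f + (\dot f)^2 = 0$ then forces $k_2 = 0$, so $(M_2,g_2)$ is flat. The only thing to verify is that the hypothesis $f^2(t) = at + b$ is actually equivalent (or at least sufficient) for $f\ddot f + (\dot f)^2 = 0$, which is an elementary ODE observation already sketched in the paragraph preceding the corollary.

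First I would recall from Theorem~\ref{thm:flat projective curvature SGRW} and the discussion immediately before it that pseudo-projective flatness yields ${\rm R}_2(X_2,Y_2)Z_2 = -\bigl(f\ddot f + (\dot f)^2\bigr)[g_2(Y_2,Z_2)X_2 - g_2(X_2,Z_2)Y_2]$ on $M_2$, i.e. $(M_2,g_2)$ has constant sectional curvature equal to $-\bigl(f\ddot f + (\dot f)^2\bigr)$. Next I would observe that $\frac{d}{dt}(f\dot f) = \dot f \dot f + f \ddot f = f\ddot f + (\dot f)^2$, so the condition $f\ddot f + (\dot f)^2 = 0$ is exactly $f\dot f = \text{const}$, equivalently $\frac{d}{dt}\bigl(f^2\bigr) = 2f\dot f = \text{const}$, which integrates to $f^2(t) = at + b$. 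Conversely, if $f^2(t) = at + b$ then differentiating twice gives $2f\dot f = a$ and $2\bigl(f\ddot f + (\dot f)^2\bigr) = 0$, so the sectional curvature of $M_2$ vanishes. Substituting $f\ddot f + (\dot f)^2 = 0$ into the curvature formula gives ${\rm R}_2 = 0$, hence $(M_2,g_2)$ is flat, and the Ricci tensor ${\rm Ric}_2 = -\bigl(f\ddot f + (\dot f)^2\bigr)(m_2-1)g_2$ vanishes as well, consistent with flatness.

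There is really no substantive obstacle here: the corollary is a direct specialization of Theorem~\ref{thm:flat projective curvature SGRW} under the extra algebraic constraint on $f$, and the ODE manipulation is routine. The only point that deserves a line of care is making sure the logical direction is the one needed — we are given $f^2(t) = at + b$ and must deduce flatness, so the implication $f^2(t) = at + b \Rightarrow f\ddot f + (\dot f)^2 = 0$ is what is used, and this is the easy direction. One should also note in passing that the case $a = 0$ (so $f$ constant) is included and still gives $k_2 = 0$, and that $f > 0$ on $I$ requires $at + b > 0$ throughout $I$, which is a constraint on the interval but does not affect the conclusion. Thus the proof is a two-line computation plus citation of the preceding theorem.
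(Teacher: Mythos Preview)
Your proposal is correct and follows exactly the paper's approach: the paper derives, in the discussion preceding Theorem~\ref{thm:flat projective curvature SGRW}, that pseudo-projective flatness forces $(M_2,g_2)$ to have constant sectional curvature $k_2=-\bigl(f\ddot f+(\dot f)^2\bigr)$, and the sentence just before the corollary observes that $f\ddot f+(\dot f)^2=0$ is equivalent to $f^2(t)=at+b$, so flatness of $(M_2,g_2)$ follows. Your write-up simply makes these two steps explicit and checks the easy implication $f^2(t)=at+b \Rightarrow f\ddot f+(\dot f)^2=0$, which is precisely what is needed.
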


\subsection{Sequential Standard Static Spacetimes}

A sequential standard static spacetime is a sequential warped product
of the form $\bar{M}=(M_1\times_f M_2)\times_h I$ endowed with the metric
$\bar{g}=(g_1\oplus  f^2 g_2 ) \oplus h^2 (-\textrm{d}t^2)$.
As an application of Theorem \ref{thm:projective curvature} and Theorem \ref{thm:pseudo projective flat}, the following results are directly obtained. Therefore, we omit their proofs.
\begin{theorem} \label{thm:projective curvature SSST}
The non-zero components of the pseudo-projective curvature of a sequential standard
static spacetime $(\bar{M}, \bar{g})$ of the form $\bar{M}=(M_1\times_f M_2)\times_h I$ equipped with
$\bar{g}=(g_1\oplus  f^2 g_2 ) \oplus h^2 (-\textrm{d}t^2)$ are as follow:
\begin{enumerate}
\item \begin{eqnarray*} \bar{{\rm P}}(X_1,Y_1)Z_1 & = & \alpha {\rm R}_1(X_1,Y_1)Z_1 \\
& + & \beta[{\rm Ric}_1(Y_1,Z_1)X_1 - \frac{m_2}{f}{\rm H}_1^f(Y_1,Z_1)X_1 - \frac{m_3}{h}{\rm H}^h(Y_1,Z_1)X_1] \\
& - & \beta[{\rm Ric}_1(X_1,Z_1)Y_1 - \frac{m_2}{f}{\rm H}_1^f(X_1,Z_1)Y_1 - \frac{m_3}{h}{\rm H}^h(X_1,Z_1)Y_1] \\
& - & \frac{\tau}{n}\bigl( \frac{\alpha}{n-1}+\beta \bigl)[g_1(Y_1,Z_1)X_1-g_1(X_1,Z_1)Y_1],
\end{eqnarray*}
\item \begin{eqnarray*} \bar{{\rm P}}(X_2,Y_2)Z_2 & = & \alpha{\rm R}_2(X_2,Y_2)Z_2 - \alpha \|\nabla_1 f\|^2
[g_2(Y_2,Z_2)X_2-g_2(X_2,Z_2)Y_2] \\
& + & \beta[{\rm Ric}_2(Y_2,Z_2)X_2 - f^\sharp g_2(Y_2,Z_2)X_2 - \frac{m_3}{h}{\rm H}^h(Y_2,Z_2)X_2] \\
& - & \beta[{\rm Ric}_2(X_2,Z_2)Y_2 - f^\sharp g_2(X_2,Z_2)Y_2 - \frac{m_3}{h}{\rm H}^h(X_2,Z_2)Y_2] \\
& - & \frac{\tau}{n}\bigl( \frac{\alpha}{n-1}+\beta \bigl) f^2 [g_2(Y_2,Z_2)X_2-g_1(X_2,Z_2)Y_2],
\end{eqnarray*}
\item \begin{eqnarray*} \bar{{\rm P}}(X_1,Y_2)Z_1 & = & \frac{\alpha}{f}{\rm H}_1^f(X_1,Z_1)Y_2 +\frac{\tau}{n} \bigl( \frac{\alpha}{n-1}+\beta \bigl) g_1(X_1,Z_1)Y_2\\
& - & \beta [{\rm Ric}_1(X_1,Z_1)Y_2 - \frac{m_2}{f}{\rm H}_1^f(X_1,Z_1)Y_2 - \frac{m_3}{h}{\rm H}^h(X_1,Z_1)Y_2 ],
\end{eqnarray*}
\item \begin{eqnarray*} \bar{{\rm P}}(X_1,Y_2)Z_2 & = & -\alpha f g_2(Y_2,Z_2) \nabla^1_{X_1} \nabla_1 f-\frac{\tau}{n} \bigl( \frac{\alpha}{n-1}+\beta \bigl) f^2 g_2(Y_2,Z_2)X_1 \\
& + & \beta [{\rm Ric}_2(Y_2,Z_2)X_1 - f^\sharp g_2(Y_2,Z_2)X_1 - \frac{m_3}{h}{\rm H}^h(Y_2,Z_2)X_1],
\end{eqnarray*}
\item \begin{eqnarray*} \bar{{\rm P}}(X_1,\partial_t)Z_1 & = & \frac{\alpha}{f}{\rm H}_1^f(X_1,Z_1)\partial_t +\frac{\tau}{n} \bigl( \frac{\alpha}{n-1}+\beta \bigl) g_1(X_1,Z_1)\partial_t \\
& - & \beta [{\rm Ric}_1(X_1,Z_1)\partial_t - \frac{m_2}{f}{\rm H}_1^f(X_1,Z_1)\partial_t - \frac{m_3}{h}{\rm H}^h(X_1,Z_1)\partial_t ],
\end{eqnarray*}
\item $\displaystyle{\bar{{\rm P}}(X_1,\partial_t)Z_2 = \frac{\alpha}{h} {\rm H}^h(X_1,Z_2)\partial_t },$
\item $\displaystyle{\bar{{\rm P}}(X_2,\partial_t)Z_1 = \frac{\alpha}{h} {\rm H}^h(X_2,Z_1)\partial_t },$
\item \begin{eqnarray*} \bar{{\rm P}}(X_2,\partial_t)Z_2 & = & \frac{\alpha}{h}{\rm H}_1^f(X_2,Z_2)\partial_t + \frac{\tau}{n} \bigl( \frac{\alpha}{n-1}+\beta \bigl) f^2 g_2(X_2,Z_2)\partial_t\\
& - & \beta [{\rm Ric}_2(X_2,Z_2)\partial_t - f^\sharp g_2(X_2,Z_2)\partial_t - \frac{m_3}{h}{\rm H}^h(X_2,Z_2)\partial_t ],
\end{eqnarray*}
\item \begin{eqnarray*} \bar{{\rm P}}(X_1,\partial_t)\partial_t & = & \alpha h \nabla_{X_1} \nabla h +  \beta h^\sharp X_1 + \frac{\tau}{n} \bigl( \frac{\alpha}{n-1}+\beta \bigl) h^2 X_1,
\end{eqnarray*}
\item \begin{eqnarray*} \bar{{\rm P}}(X_2,\partial_t)\partial_t & = & \alpha h \nabla_{X_2} \nabla h +  \beta h^\sharp X_2 + \frac{\tau}{n} \bigl( \frac{\alpha}{n-1}+\beta \bigl) h^2 X_2,
\end{eqnarray*}
\end{enumerate}
where $X_i \in {\rm T}(M_i)$ for any $i=1,2,3$ and $n=1+m_2+m_3.$
\end{theorem}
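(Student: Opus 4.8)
The plan is to derive Theorem~\ref{thm:projective curvature SSST} directly from Theorem~\ref{thm:projective curvature} by specializing the third factor to the one-dimensional Lorentzian manifold $(M_3,g_3)=(I,-{\rm d}t^2)$. In this situation $m_3=1$, the Levi-Civita connection $\nabla^3$ is flat, so that ${\rm R}_3\equiv 0$, ${\rm Ric}_3\equiv 0$ and $\tau_3=0$; moreover $h^\sharp=h\Delta h+(m_3-1)\|{\rm grad}\,h\|^2=h\Delta h$, the warping function $h$ is pulled back from $M_1\times M_2$ (so ${\rm grad}\,h\in{\rm T}(M_1\times M_2)$ and no spurious $\partial_t$-components appear), and $g_3(\partial_t,\partial_t)=-1$. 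Every vector field tangent to the third factor is a multiple of $\partial_t$. I would first record the scalar-curvature expression obtained from Lemma~\ref{thm:scalar curvature} with $m_3=1$, the term $\tfrac{m_3(m_3-1)}{h^2}\|{\rm grad}\,h\|^2$ dropping out, and then substitute these simplifications into the eleven components listed in Theorem~\ref{thm:projective curvature}.

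Running through those components one by one: items (1)--(4), which involve only $M_1$ and $M_2$, carry over verbatim after setting $m_3=1$, giving items (1)--(4) of Theorem~\ref{thm:projective curvature SSST}. Items (5)--(8) have exactly one leg in the third factor; replacing $Y_3$ (and $Z_3$, where it occurs) by $\partial_t$ reproduces items (5)--(8) of the new theorem with no sign change, since $g_3$ does not enter those formulas. The genuinely different cases are items (9) and (10): there $\bar{{\rm R}}(X_i,Y_3)Z_3$ carries a factor $g_3(Y_3,Z_3)$, and with $Y_3=Z_3=\partial_t$ one has $g_3(\partial_t,\partial_t)=-1$, which flips every sign; together with ${\rm Ric}_3=0$ and $h^\sharp=h\Delta h$ this turns $\bar{{\rm P}}(X_i,\partial_t)\partial_t$ into $\alpha h\nabla_{X_i}\nabla h+\beta h^\sharp X_i+\tfrac{\tau}{n}\bigl(\tfrac{\alpha}{n-1}+\beta\bigr)h^2X_i$, that is, items (9) and (10). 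Finally, for item (11) with all three arguments tangent to the one-dimensional factor, write $X_3=a\partial_t$, $Y_3=b\partial_t$, $Z_3=c\partial_t$; then $g_3(Y_3,Z_3)X_3=g_3(X_3,Z_3)Y_3=-abc\,\partial_t$, so the bracket $[g_3(Y_3,Z_3)X_3-g_3(X_3,Z_3)Y_3]$ vanishes, and with ${\rm R}_3=0$, ${\rm Ric}_3=0$ this forces $\bar{{\rm P}}(X_3,Y_3)Z_3=0$; this is why that component does not appear in the list.

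I do not expect a serious obstacle: the statement is a bookkeeping corollary of Theorem~\ref{thm:projective curvature}, and the only points requiring care are (a) consistently propagating the sign $g_3(\partial_t,\partial_t)=-1$ through items (9) and (10), (b) using the vanishing of ${\rm R}_3$ and ${\rm Ric}_3$ together with the collapse $h^\sharp=h\Delta h$ at $m_3=1$, and (c) observing that, since $h$ is pulled back from $M_1\times M_2$, the terms ${\rm H}^h$ and $\nabla_{X_i}\nabla h$ computed in the ambient metric are exactly those already appearing in Theorem~\ref{thm:projective curvature}, so no correction coming from the timelike direction enters. This is precisely why the result can be stated as a direct consequence and its computation omitted.
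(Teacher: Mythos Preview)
Your proposal is correct and matches the paper's approach exactly: the paper states that the result is ``directly obtained'' as an application of Theorem~\ref{thm:projective curvature} and omits the proof, and your argument carries out precisely that specialization $M_3=(I,-{\rm d}t^2)$, with the appropriate sign flip from $g_3(\partial_t,\partial_t)=-1$ in items (9)--(10) and the vanishing of item (11) by one-dimensionality.
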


\begin{theorem} \label{thm:flat projective curvature SSST}
Let $(\bar{M}, \bar{g})$ be a sequential standard static spacetime
of the form $\bar{M}=(M_1\times_f M_2)\times_h I$ endowed with the metric
$\bar{g}=(g_1\oplus  f^2 g_2 ) \oplus h^2 (-\textrm{d}t^2).$ If $(\bar{M}, \bar{g})$
is pseudo-projectively flat, then
\begin{enumerate}
\item $(M_1,g_1,-\ln f, \lambda_1/\alpha)$ is a
generalized quasi-Einstein manifold,
\item $(M_2,g_2,\mu \ln h, \lambda_2)$ is a
generalized quasi-Einstein manifold, and
\item $\displaystyle{\frac{1}{f}{\rm H}_1^f = \frac{1}{h}{\rm H}_1^h}$ on $M_1.$
\end{enumerate}
\end{theorem}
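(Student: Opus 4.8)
The plan is to obtain the statement as the special case of the general theory already developed in which the third factor $(M_3,g_3)$ is the one-dimensional Lorentzian line $(I,-\mathrm dt^2)$, so that $m_3=1$, the field $Z_3$ is a function times $\partial_t$, and $h\in\mathcal C^\infty(M_1\times M_2)$ plays the role of the second warping function exactly as before. First I would observe that Theorem~\ref{thm:projective curvature SSST} is just Theorem~\ref{thm:projective curvature} under this identification: items (1)--(10) there are items (1)--(10) here with $Y_3,Z_3$ replaced by $\partial_t$, while item (11) of Theorem~\ref{thm:projective curvature}, namely $\bar{\mathrm P}(X_3,Y_3)Z_3=\alpha\mathrm R_3(X_3,Y_3)Z_3+\beta[\dots]-\tfrac{\tau}{n}(\tfrac{\alpha}{n-1}+\beta)h^2[\dots]$, vanishes identically, because $\mathrm R_3$, the bracket $g_3(Y_3,Z_3)X_3-g_3(X_3,Z_3)Y_3$ and the corresponding Ricci bracket are each antisymmetric in two vectors spanning a line.

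Assume now that $(\bar M,\bar g)$ is pseudo-projectively flat, i.e.\ $\bar{\mathrm P}=0$. The cleanest route is to invoke Theorem~\ref{thm:pseudo projective flat} verbatim with $m_3=1$: its conclusion (1) is conclusion (1) here; its conclusion (2) is conclusion (2) here, with $\mu$ now fixed by $\mu(\alpha-\beta)=\beta m_3 f^2=\beta f^2$ and $\lambda_2$ by the corresponding expression at $m_3=1$; and its conclusion (4), namely $\tfrac1f\mathrm H^f_1=\tfrac1h\mathrm H^h_1$ on $M_1$, is conclusion (3) here. The only conclusion of Theorem~\ref{thm:pseudo projective flat} that is not carried over is conclusion (3) there --- that $(M_3,g_3)$ is Einstein when $\alpha\neq\beta$ --- and the reason is simply that a one-dimensional pseudo-Riemannian manifold is automatically Einstein ($\mathrm{Ric}_3=0=0\cdot g_3$ for $(I,-\mathrm dt^2)$); hence it carries no information and no restriction on $\alpha,\beta$ is needed for it.

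If one prefers a self-contained argument, one re-runs the proof of Theorem~\ref{thm:pseudo projective flat} starting from Theorem~\ref{thm:projective curvature SSST}: contract item (1) over $Y_1,Z_1$, combine with item (3)$=0$ (using $\alpha\neq0$) to reach $\mathrm{Ric}_1-\tfrac1f\mathrm H^f_1=\tfrac{\lambda_1}{\alpha}g_1$, and rewrite this through \eqref{prem:eq.1} with $m=1$, $\varphi=f$ as the Catino-type equation $\mathrm{Ric}_1-\mathrm H^{\ln f}_1-\mathrm d(\ln f)\otimes\mathrm d(\ln f)=\tfrac{\lambda_1}{\alpha}g_1$; contract item (2)$=0$ over $Y_2,Z_2$, split $\mathrm H^h(X_2,Y_2)=f\nabla h(f)\,g_2(X_2,Y_2)+\mathrm H^h_2(X_2,Y_2)$ by Lemma~\ref{thm:hessian}(4) to land on \eqref{2P} with $m_3=1$, and apply \eqref{prem:eq.1} with $m=\mu$, $\varphi=h$ to get the Catino-type equation on $M_2$ with potential $\mu\ln h$; and finally read the Hessian identity of conclusion (3) by comparing the two components that compute $\alpha\bar{\mathrm R}(X_1,Y_2)Z_1$ and $\alpha\bar{\mathrm R}(X_1,\partial_t)Z_1$ (equivalently, from conclusion (4) of Theorem~\ref{thm:pseudo projective flat}). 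These are all routine traces; the main friction I foresee is the bookkeeping of $\mathrm H^h$, which takes three different forms on $\mathrm T(M_1)^{2}$, $\mathrm T(M_1)\times\mathrm T(M_2)$ and $\mathrm T(M_2)^{2}$ by Lemma~\ref{thm:hessian}, together with the scalar coefficients $f^\sharp$, $\lambda_1$, $\lambda_2$, $\mu$ and $\tfrac{\tau}{n}(\tfrac{\alpha}{n-1}+\beta)$; and one must recall that, exactly as in Theorem~\ref{thm:pseudo projective flat}(2), the borderline case $\alpha=\beta$ degenerates \eqref{2P} to $\mathrm H^h_2=(\dots)g_2$, so conclusion (2) as stated tacitly presumes $\alpha\neq\beta$.
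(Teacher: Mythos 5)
Your proposal is correct and follows the paper's own route: the paper states Theorem \ref{thm:flat projective curvature SSST} as a direct specialization of Theorem \ref{thm:projective curvature} and Theorem \ref{thm:pseudo projective flat} to $M_3=(I,-\mathrm{d}t^2)$ with $m_3=1$ (the Einstein condition on the one-dimensional factor becoming vacuous), and omits the proof. Your added remark that conclusion (2) tacitly presumes $\alpha\neq\beta$, as in the general theorem, is a fair and accurate observation about a proviso the paper drops in this restatement.
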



\begin{thebibliography}{99}
\bibitem{Ayar21} G. Ayar, \textit{Pseudo-projective and quasi-conformal curvature tensors on Riemannian submersions},
Math. Methods Appl. Sci. \textbf{44}(17), (2021), 13791-13798.

\bibitem{barros} A. Barros, R. Batista, E. Jr. Ribeiro, \textit{Rigidity of gradient almost Ricci solitons.} Illinois J. Math. \textbf{56}4, 1267--1279 (2012).

\bibitem{marco} M. Bertola, D. Gouthier, \textit{Warped products with special Riemannian curvature,} Bol. Soc. Bras. Mat., \textbf{32}(1) (2001), 45--62.


\bibitem{Besse2008} A. L. Besse, \emph{Einstein Manifolds}, Classics in Mathematics, Springer-Verlag, Berlin, (2008).



\bibitem{Bhattacharyya21} N. Bhunia, S. Pahan, A. Bhattacharyya, \textit{Pseudo-projective curvature tensor on warped
product manifolds and its applications in space-times}, SUT J. Math. \textbf{57}(2), (2021), 93-107.

\bibitem{bishop} R. L. Bishop, B. O'Neill, \textit{Manifolds of negative curvature}, { Trans. Amer. Math. Soc.}  {\bf 145},  1-49 (1969).


\bibitem{catino} G. Catino,   \textit{Generalized quasi Einstein manifolds with harmonic Weyl tensor,} Math. Z. {\bf 271}, 751--756 (2012).


\bibitem{cheeger} J. Cheeger, T. H.  Colding,  \textit{Lower bounds on Ricci curvature and the almost rigidity of warped products.} {Ann. of Math.}   {\bf 2}, 144, 189--237 (1996).



\bibitem{de filomat} U. C. De, S. Shenawy, B. \"{U}nal,  \textit{Sequential warped products: curvature and conformal vector fields,}
Filomat, {\bf 33}(13), (2019), 4071-4083.



\bibitem{ferus}D. Ferus,  A Remark on Codazzi Tensors on Constant Curvature Space, Lecture Notes Math. 838, Global Differential Geometry and Global Analysis, Springer-Verlag, New York, 1981.

\bibitem{guler-f} S. G\"uler, S. Altay Demirba\u{g}, \textit{Riemannian manifolds Satisfying Certain Conditions
on Pseudo-Projective Curvature Tensor}, Filomat 30(3) (2016), 721-731
DOI 10.2298/FIL1603721G. 

\bibitem{guler-electronic} S. G\"uler, \textit{Sequential Warped Products and Their Applications},
Int. Electronic J. Geo., 14(2) 277-291, (2021). DOI: https://doi.org/10.36890/iejg.937419

\bibitem{Karaca} F. Karaca and C. \"{O}zg\"{u}r,
\emph{On quasi-Einstein sequential warped product manifolds},
J. Geom. Phys., \textbf{165}, Article ID 104248, 11 p. (2021)
















\bibitem{mishra}R. S. Mishra, Structure on a differentiable manifold and their applications,  Chandrama Prakashan, 50 A, Balrampur House, Allahabad, India, 1984.



\bibitem{Oneill:1983} B. O'Neill,
\emph{Semi-Riemannian Geometry with Applications to Relativity},
Academic Press Limited, London, 1983.




\bibitem{Pahan} S. Pahan and B. Pal,
\emph{On Einstein sequential warped product spaces},
J. Math. Phys. Anal. Geom., \textbf{15}(3), 379-394, (2019).

\bibitem{prasad} B. Prasad, A pseudo-projective curvature tensor on a Riemannian manifolds, Bull. Cal. Math. Soc. 94 (2002) 163-166.



\bibitem{shenawy} S. Shenawy,  \textit{A note on sequential warped product manifolds}. arXiv:1506.06056v1 [math.DG] (2015).



\bibitem{pina} Souso, M. L., Pina, R.: \textit{Gradient Ricci solitons with structure of warped product.} {Results Math.} {\bf 17}, 825--840 (2017).

\bibitem{mica}M. S. Stankovi\'c, M. Lj. Zlatanovi\'c, N. O. Vesi\'c, \textit{Some Properties of ET-Projective Tensors Obtained from Weyl Projective Tensor}, Filomat 29(3) (2015) 573--584.

\bibitem{tashiro}Tashiro, Y.: \textit{Complete Riemannian manifolds and some vector fields.} { Trans. Amer. Math. Soc.}{\bf 117}, 251--275 (1965).
  
 
 \bibitem{blaga-turki} N. B. Turki, A. M. Blaga, S. Deshmukh, \textit{Soliton-Type Equations on a Riemannian Manifold}, Mathematics 2022, \textbf{10}, 633. https://doi.org/10.3390/math10040633.
  
  \bibitem{yun} Yun, G., Co, J.,  Hwang, S. : \textit{Bach-flat $h$-almost gradient Ricci solitons. } { Pacific J. Math.}  {\bf 288} (2), 475--488 (2017).

\end{thebibliography}
\end{document}